\newcommand{\bb}{\mathbb}
\newcommand{\conv}{\mathrm{conv}}
\newcommand{\lin}{\mathrm{lin}}
\newcommand{\R}{\bb R}
\newcommand{\Z}{\bb Z}
\newcommand{\rec}{\mathrm{rec}}
\newcommand{\old}[1]{{}}
\newtheorem{prop}{Proposition}
\newtheorem{theorem}[prop]{Theorem}
\newtheorem{lemma}[prop]{Lemma}
\newtheorem{claim}{Claim}
\newtheorem{remark}[prop]{Remark}
\newcommand{\sm}{\setminus}
\def\st{\,|\,}
\begin{document}
\title{Unique lifting of integer variables in minimal inequalities}

\author{%
Amitabh Basu${}^{1}$, \; Manoel Camp\^elo${}^{2,6}$, \; Michele
Conforti${}^{3,8}$, \\ G\'erard Cornu\'ejols${}^{4,7}$,\; Giacomo
Zambelli${}^{5,8}$}

\maketitle

\begin{abstract}
This paper contributes to the theory of cutting planes for mixed integer linear programs (MILPs). Minimal valid inequalities are well understood for a relaxation of an MILP in tableau form where all the nonbasic variables are continuous; they are derived using the gauge function of maximal lattice-free convex sets. In this paper we study lifting functions for the nonbasic {\em integer} variables starting from such minimal valid inequalities. We characterize precisely when the lifted coefficient is equal to the coefficient of the corresponding continuous variable in every minimal lifting (This result first appeared in the proceedings of IPCO 2010). The answer is a nonconvex region that can be obtained as a finite union of convex polyhedra. We then establish a necessary and sufficient condition for the uniqueness of the lifting function.
\end{abstract}

{\bf Keywords:} mixed integer linear programming, minimal valid inequality, lifting

{\bf Mathematics Subject Classification (2000):}  90C11

\footnotetext[1] {Department of Mathematics, University of California, Davis, CA 95616. abasu@math.ucdavis.edu}

\footnotetext[2] {Departamento de Estat\'\i stica e Matem\'atica Aplicada,
Universidade Federal do Cear\'a, Brazil. \\ mcampelo@lia.ufc.br}

\footnotetext[3] {Dipartimento di Matematica Pura e Applicata, Universit\'a di Padova, Via Trieste 63, 35121 Padova, Italy. \\ conforti@math.unipd.it}

\footnotetext[4] {Tepper School of Business, Carnegie Mellon University, Pittsburgh, PA 15213. gc0v@andrew.cmu.edu}

\footnotetext[5] {London School of Economics and Political Sciences, Houghton Street, London WC2A 2AE. G.Zambelli@lse.ac.uk}

\footnotetext[6] {Supported by CNPq Brazil.}

\footnotetext[7] {Supported by  NSF grant CMMI1024554 and ONR grant
N00014-09-1-0033.}

\footnotetext[8] {Supported by  the Progetto di Eccellenza 2008-2009
of the Fondazione Cassa Risparmio di Padova e Rovigo.}

\section{Introduction}\label{sec:gen-lift}

In the context of mixed integer linear programming, there has been a renewed interest recently in the study of cutting planes  that cut off a basic solution of the linear programming relaxation. More precisely, consider a mixed integer linear set in which the variables are partitioned into a basic set $B$ and a nonbasic set $N$, and $K \subseteq B \cup N$ indexes the integer variables:

\begin{equation} \label{basic}
\begin{array}{rrcll}
& x_i & = &  f_i - \sum_{j \in N}  a_{ij} x_j & \mbox{ for } i \in B \\
& x & \geq & 0 \\
& x_k & \in & \mathbb{Z} & \mbox{ for } k \in K.
\end{array}
\end{equation}

 Let $X$ be the relaxation of \eqref{basic} obtained by dropping the nonnegativity restriction on all the basic variables $x_i$, $i \in B$. The convex hull of $X$   is the
{\em corner polyhedron} introduced by Gomory \cite{gom} (see also \cite{gomjohn}). Note that, for any $i \in B \setminus K$, the equation $x_i  =   f_i - \sum_{j \in N}  a_{ij} x_j$ can be removed from the formulation of $X$ since it just defines variable $x_i$. Therefore, throughout the paper, we will assume $B \subseteq K$, i.e. all basic variables are integer. Andersen, Louveaux, Weismantel and Wolsey \cite{alww} studied the corner polyhedron when $|B|=2$ and $B = K$, i.e. all nonbasic variables are continuous. They give a complete characterization of the corner polyhedron using intersection cuts (Balas \cite{balas}) arising from splits, triangles and quadrilaterals. This very elegant result has been extended to $|B|>2$  and $B = K$ by showing a correspondence between minimal valid inequalities and maximal lattice-free convex sets \cite{bccz}, \cite{BorCor}. These results and their extensions \cite{bccz2}, \cite{DW} are best described in an infinite model, which we motivate next.

\subsection{The Infinite Model}

A classical family of cutting planes for \eqref{basic} is that of Gomory mixed integer cuts. For a given row $i \in B$ of the tableau, the Gomory mixed integer cut is of the form $\sum_{j \in N \setminus K} \psi(a_{ij}) x_j + \sum_{j \in N \cap K} \pi(a_{ij}) x_j \geq 1$ where $\psi$ and $\pi$ are functions given by simple formulas. A nice feature of the Gomory mixed integer cut is that, for fixed $f_i$, the same functions $\psi$, $\pi$ are used for any possible choice of the $a_{ij}$s in \eqref{basic}. It is well known that the Gomory mixed integer cuts are also valid for $X$. More generally, let $a^j$ be the vector with entries $a_{ij}$, $i \in B$; we are interested in pairs $(\psi, \pi)$ of functions such that the inequality $\sum_{j \in N \setminus K} \psi(a^{j}) x_j + \sum_{j \in N \cap K} \pi(a^{j}) x_j \geq 1$ is valid for $X$ for any possible number of nonbasic variables and any choice of the nonbasic coefficients $a_{ij}$. Since we are interested in nonredundant inequalities, we can assume that the function $(\psi, \pi)$ is pointwise minimal. While a general characterization of minimal valid functions seems hopeless (see for example \cite{bccz08}), when $N \cap K = \emptyset$ the minimal valid functions $\psi$ are well understood in terms of maximal lattice-free convex sets, as already mentioned. Starting from such a minimal valid function $\psi$, an interesting question is how to generate a function $\pi$ such that $(\psi, \pi)$ is valid and minimal. Recent papers \cite{ccz}, \cite{dw} study when such a function $\pi$ is unique. Here we prove two theorems that generalize and unify results from these two papers.

In order to formalize the concept of valid function $(\psi,\pi)$, we introduce the following infinite model. In the setting below, we also allow further linear constraints on the basic variables. Let $S$ be the set of integral points in some rational polyhedron in
$\R^n$ such that $\dim(S)=n$ (for example, $S$ could be the set of nonnegative integer points). Let $f \in \R^n \setminus S$. Consider
the following infinite relaxation of \eqref{basic}, introduced in~\cite{DW}.

\begin{equation} \label{mod:int}
x=\begin{array}[t]{l}
\displaystyle{f + \sum_{r\in \R^n}r s_r + \sum_{r\in \R^n}r y_r,} \\
x\in S,\\
s_r\in \R_+,\, \forall r\in \R^n, \\
y_r\in \Z_+, \, \forall r\in \R^n,\\
s,y \text{ have finite support}
\end{array}
\end{equation}
where the nonbasic continuous variables have been renamed $s$ and the nonbasic integer variables have been renamed $y$, and where an infinite dimensional vector has {\em finite support} if it has a finite number of nonzero entries. Given two functions $\psi, \pi : \R^n \rightarrow \R$, $(\psi,\pi)$ is said
to be {\em valid} for (\ref{mod:int}) if the inequality
$\sum_{r\in\R^n}\psi(r)s_r + \sum_{r\in\R^n}\pi(r)y_r\geq 1$ holds
for every $(x,s,y)$ satisfying (\ref{mod:int}). We also consider the infinite model where we only have continuous nonbasic variables.

\begin{equation} \label{mod:cont}
x=\begin{array}[t]{l}
\displaystyle{f + \sum_{r\in \R^n}r s_r} \\
x\in S,\\
s_r\in \R_+,\: \forall r\in \R^n,\\
s \text{ has finite support.}
\end{array}
\end{equation}

A function $\psi : \R^n \rightarrow \R$ is said to be {\em valid}
for \eqref{mod:cont} if the inequality $\sum_{r\in\R^n}\psi(r)s_r
\geq 1$ holds for every $(x,s)$ satisfying \eqref{mod:cont}. Given a valid
function $\psi$ for \eqref{mod:cont}, a function $\pi$ is a {\em
lifting} of $\psi$ if $(\psi, \pi)$ is valid for (\ref{mod:int}).
One is interested only in (pointwise) {\em minimal valid functions}, since non-minimal ones are implied by some minimal valid function.
If $\psi$ is a minimal valid function for
\eqref{mod:cont} and $\pi$ is a lifting of $\psi$ such that $(\psi,
\pi)$ is a minimal valid function for (\ref{mod:int}) then we say that $\pi$ is
a {\em minimal lifting} of $\psi$. It can be shown, using Zorn's Lemma, that for every lifting $\pi$ of $\psi$ there exists some minimal lifting $\pi'$ of $\psi$ such that $\pi'\leq \pi$.

\subsection{Sequence Independent Lifting and Unique Lifting Functions}

While minimal valid functions for \eqref{mod:cont} have a simple characterization~\cite{bccz2}, minimal valid functions for (\ref{mod:int}) are not well understood. A general idea to derive minimal valid functions for~\eqref{mod:int} is to start from some minimal valid function $\psi$ for~\eqref{mod:cont}, and construct a minimal lifting $\pi$ of $\psi$. While there is no general technique to compute such a minimal lifting $\pi$, it is known that there exists a region $R_\psi$, containing the origin in its interior, where $\psi$ coincides with $\pi$ for any minimal lifting $\pi$. This latter fact was proved by Dey and Wolsey~\cite{dw} for the case of $S=\Z^2$, and by Conforti, Cornu\'ejols and Zambelli~\cite{ccz} for the general case. In the latter paper, the authors describe the set $R_\psi$ in the case where $\psi$ is defined by a simplicial maximal lattice-free polytope. In this paper we give a precise description of the region $R_\psi$ (Theorem~\ref{thm:main}) in general (This result first appeared in the proceedings of IPCO 2010 \cite{ipco}). The importance of the region $R_\psi$ comes from the fact that for any ray $r \in R_\psi$, the minimal lifting coefficient $\pi(r)$ is unique, i.e. it is the same for every minimal lifting. Thus, we get {\em sequence independent} lifting coefficients for the rays $r$ in $R_\psi$. Moreover, these coefficients can be computed directly from the function $\psi$ for which we have more direct tools. These ideas are related to the results of Balas and Jeroslow~\cite{baljer}.

Furthermore, it is remarked in~\cite{ccz} that, if $\pi$ is a minimal lifting of $\psi$, then $\pi(r)=\pi(r')$ for every $r,r'\in\R^n$ such that $r-r'\in\Z^n\cap\lin(\conv(S))$. Therefore the coefficients of any minimal lifting $\pi$ are uniquely determined in the region $R_\psi + (\Z^n\cap\lin(\conv(S)))$ (throughout this paper, we use  $+$ to denote the Minkowski sum of two sets). In particular, whenever $\R^n$ can be covered by translating $R_\psi$ by integer vectors in $\lin(\conv(S))$, the function $\psi$ has a unique minimal lifting $\pi$. As mentioned above, if $\psi$ has a unique minimal lifting, we can compute the best possible coefficients for all the integer variables in our problem, in a {\em sequence independent} manner. Thus, it is very useful to recognize the minimal valid functions $\psi$ with unique minimal liftings. The second main result in this paper (Theorem~\ref{thm:main2}) is to show that, for the case when $S = \Z^n$, the covering property is in fact a necessary and sufficient condition for the uniqueness of minimal liftings : if $R_\psi + \Z^n\neq \R^n$, then there are at least two distinct minimal liftings for $\psi$. Theorem~\ref{thm:main2} thus converts the question of recognizing minimal valid functions $\psi$ that have a unique lifting function to the geometric question of covering $\R^n$ by lattice translates of the region $R_\psi$. This equivalence is utilized by Basu, Cornu\'ejols and K\"oppe to study the unique lifting properties of certain families of minimal valid functions~\cite{bck}.

\section{Overview of the Main Results}

Let $S$ be the set of integral points in some rational polyhedron in
$\R^n$ such that $\dim(S)=n$ (where $\dim(S)$ denotes the dimension of the affine hull of $S$), and let $f \in \R^n \setminus S$.
To state our main results, we need to explain the characterization of minimal valid functions for~\eqref{mod:cont}. Given a polyhedron $P$, we denote by $\rec(P)$ and $\lin(P)$ the recession cone and the lineality space of $P$.

\subsection{Minimal Valid Functions and Maximal Lattice-Free Convex Sets}

We say that a convex set $B\subseteq\R^n$ is {\em $S$-free} if $B$ does not contain any point of $S$ in its interior. When $S=\Z^n$, $S$-free convex sets are called {\em lattice-free convex sets}. A set $B$ is a {\em maximal $S$-free} convex set if it is an $S$-free convex set that is not properly contained in any $S$-free convex set. It was proved in~\cite{bccz2} that maximal $S$-free convex sets are polyhedra containing a point of $S$ in the relative interior of each facet. The following characterization of maximal $S$-free convex sets and the subsequent remark will be needed in the proofs, but the reader can skip them for now.

\begin{theorem}~\cite{bccz2}\label{Th:minimal1} A full-dimensional convex set $B$ is
a maximal $S$-free
convex set if and only if $B$ is a polyhedron that does not
contain any point of $S$ in its interior and each facet of $B$
contains a point of $S$ in its relative interior. Furthermore if
$B\cap \conv(S)$ has nonempty interior, then $\lin(B)$ contains
$\rec(B\cap\conv(S))$ implying that $\rec(B\cap\conv(S))$ = $\lin(B\cap\conv(S))$, and $\lin(B\cap\conv(S))$ is a rational subspace.
\end{theorem}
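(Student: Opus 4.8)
\medskip
\noindent The plan is to prove the two directions of the equivalence separately and to read off the ``furthermore'' clause along the way.

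\textbf{Sufficiency.} Suppose $B$ is a polyhedron with no point of $S$ in its interior and with a point of $S$ in the relative interior of every facet, and suppose for contradiction that some $S$-free convex set $B'$ strictly contains $B$. Pick $x\in B'\setminus B$. Since $B$ is a polyhedron, $x$ violates some facet-defining inequality $a^\top y\le\beta$, and the corresponding facet $F$ contains a point $s\in S$ with $s\in\relint F$. The crux is the elementary fact that $s\in\intr\bigl(\conv(B\cup\{x\})\bigr)$: near $s$ the set $B$ fills the halfspace $a^\top y\le\beta$, while for a small vector $w$ with $a^\top w>0$ one solves $s+w=\lambda x+(1-\lambda)q$ with $\lambda=a^\top w/(a^\top x-\beta)$ small and $q$ on the hyperplane $a^\top y=\beta$ close to $s$, so $q\in F\subseteq B$. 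Then $s\in\intr B'$, contradicting $S$-freeness of $B'$.

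\textbf{Necessity; the degenerate case.} Let $B$ be a full-dimensional maximal $S$-free convex set. Since $\cl B$ has the same interior as $B$, it is $S$-free, so $B=\cl B$ by maximality; thus $B$ is closed. Put $Q=\conv(S)$, a rational polyhedron with $\dim Q=n$. If $\intr B\cap\intr Q=\emptyset$, separate $B$ from $Q$ and slide the separating hyperplane until it supports $Q$, obtaining a closed halfspace $H=\{y:a^\top y\le\beta\}\supseteq B$ with $\beta=\min_{y\in Q}a^\top y$; as $S\subseteq Q$, $H$ is $S$-free, hence $B=H$ by maximality. The hyperplane $a^\top y=\beta$ cuts out a nonempty face of $Q$, which, being a face of $\conv(S)$, is the convex hull of the points of $S$ it contains; so the single facet of $H$ contains a point of $S$ in its relative interior, and this case is done.

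\textbf{Necessity; the main case $\intr B\cap\intr Q\neq\emptyset$.} Here I would first analyze the recession cone. For a rational recession direction $r$ of $B\cap Q$, scaled to a primitive integer vector of $\rec Q$ (so that $S+kr\subseteq S$ for integers $k\ge 0$), the set $\widetilde B=B+\cone\{-r\}$ is again $S$-free: if $s\in S\cap\intr\widetilde B$, then $s+mr\in\intr B$ for a suitable integer $m\ge0$ while $s+mr\in S$, contradicting $S$-freeness of $B$. By maximality $\widetilde B=B$, so $-r\in\rec B$, whence $r\in\lin B$; this gives $\rec(B\cap Q)\subseteq\lin B$, and with $\rec(B\cap Q)=\rec B\cap\rec Q$ one deduces the asserted equality with $\lin(B\cap Q)$ and, via a Kronecker-type density argument ruling out irrational lineality directions of an $S$-free set, the rationality of that subspace. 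With the recession cone pinned down, polyhedrality follows: for each $s\in S$ (lying outside $\intr B$) choose a supporting halfspace $H_s\supseteq B$ with $s\notin\intr H_s$; then $\bigcap_{s\in S}H_s$ is $S$-free and contains $B$, hence equals $B$ by maximality. Since all unboundedness of $B$ lives in the rational subspace $\lin B$, the cross-section of $B$ transverse to $\lin B$ is line-free, and its bounded part meets only finitely many points of $S$ (here discreteness of $S$ is essential), so a compactness argument lets finitely many $H_s$ suffice and $B$ is a polyhedron. Finally, if some facet $F$ of $B$ contained no point of $S$ in its relative interior, then the points of $S$ in $B\cap\aff F$ would all lie in proper faces of $F$; by discreteness of $S$ one could translate the hyperplane of $F$ outward by a small $\varepsilon>0$, re-take the convex hull, and still avoid $S$ in the interior, contradicting maximality.

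\textbf{Main obstacle.} The hard part is the polyhedrality in the main case: a priori $B$ is only an intersection of infinitely many supporting halfspaces at points of $S$, and reducing to a finite subfamily requires both the recession analysis (so that unboundedness is captured entirely by $\lin B$) and a compactness argument resting on the discreteness of $S$. By comparison the separation case, the facet push-out, and the sufficiency direction are routine.
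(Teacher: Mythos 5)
The paper itself does not prove Theorem~\ref{Th:minimal1}; it quotes it from~\cite{bccz2}, so your attempt has to be measured against that reference's (substantially longer) argument. Your sufficiency direction and the degenerate case $\intr(B)\cap\intr(\conv(S))=\emptyset$ are correct and routine. The gaps are in the main case, exactly at what you call the main obstacle, and the sketch given there does not close it. First, the recession analysis only treats \emph{rational} directions $r\in\rec(B\cap\conv(S))$: since $B$ is a priori just a closed full-dimensional convex set, $\rec(B)\cap\rec(\conv(S))$ may be, say, a single irrational ray, which contains no rational direction and is not a limit of rational directions of that cone, so the integral-translation argument says nothing about it. Handling irrational recession directions is precisely where~\cite{bccz2} needs a Diophantine-approximation argument; your ``Kronecker-type density argument'' is only invoked for the rationality of the lineality space, which is a different claim. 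Also, the passage from $\rec(B\cap\conv(S))\subseteq\lin(B)$ to $\rec(B\cap\conv(S))=\lin(B\cap\conv(S))$ is asserted rather than argued: for $r$ in the recession cone one still needs $-r\in\rec(\conv(S))$, which is not automatic and requires using more of the structure of maximal $S$-free sets.

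Second, and more seriously, the polyhedrality argument rests on the premise that ``all unboundedness of $B$ lives in $\lin B$'', i.e.\ $\rec(B)\subseteq\lin(B)$. This is false for general $S$: a maximal $S$-free set can be a wedge (see the paper's Figures~\ref{fig:region2} and~\ref{fig:region3}), whose recession cone strictly contains its trivial lineality space; the theorem only controls $\rec(B\cap\conv(S))$, not $\rec(B)$. Hence the cross-section of $B$ transverse to $\lin(B)$ need not be bounded or line-free, and the claim that only finitely many points of $S$ are relevant breaks down as stated. Moreover, even granting such finiteness (say after replacing $B$ by $B\cap\conv(S)$), passing from $B=\bigcap_{s\in S}H_s$ to a finite subintersection is not a routine compactness step: once halfspaces are discarded, the larger intersection may contain the discarded points of $S$ in its interior, so maximality can no longer be invoked and one must show directly that the enlarged set is still $S$-free. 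This finite-subfamily step is the real content of the theorem (it is the heart of Lov\'asz's theorem already for $S=\Z^n$) and it is missing from the sketch. A smaller instance of the same issue appears in your facet push-out: points of $S$ can approach the hyperplane of $F$ while escaping to infinity, so ``discreteness of $S$'' alone does not yield the required $\varepsilon$; compare Remark~\ref{rmk:exisits-delta}, which the paper presents as a nontrivial byproduct of the actual proof in~\cite{bccz2}, not as an elementary fact.
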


The above theorem is a generalization of a classical result of Lovasz~\cite{lo}, which considers the case $S=\Z^n$.

\begin{remark} \label{rmk:exisits-delta} The proof of
Theorem~\ref{Th:minimal1} in~\cite{bccz2} implies the following. Given a maximal
$S$-free convex set  $B$, there exists $\delta>0$ such that
no point of $S\setminus B$ is at a distance less than $\delta$ from $B$.
\end{remark}

\old{A convex set with nonempty interior is obviously full-dimensional. }Given an $S$-free polyhedron $B\subseteq \R^n$ containing $f$ in its interior, $B$ can be uniquely written in the form
\begin{equation}\label{eq:unique-rep-B}
B=\{x\in \R^n:\;a_i(x-f)\le 1,\;i\in I\},
 \end{equation}
 \noindent
 where $I$ is a finite set of indices in one-to-one correspondence with the facets of $B$.

Let $\psi_B : \R^n \rightarrow \R$ be the  function defined by
\begin{equation}\label{eq:psiB}
\psi_B(r)= \max_{i\in I} a_ir, \quad \forall r\in \R^n.
\end{equation}

%Note in particular that, since maximal $S$-free convex sets are polyhedra~\cite{bccz2}, the above function is defined for all maximal $S$-free convex sets $B$ containing $f$ in the interior.

\begin{theorem}~\cite{bccz2} \label{Th:minimal2}
If $B$ is a maximal $S$-free convex set containing $f$ in its interior, then $\psi_B$ is a minimal valid
function for~\eqref{mod:cont}.

Conversely,  let $\psi$ be a minimal valid
function for~\eqref{mod:cont}. Then the set \begin{equation*}\label{eq:Bpsi}
B_\psi:=\{x\in\R^n\st \psi(x-f)\leq 1\}\end{equation*} is a maximal $S$-free convex set
containing $f$ in its interior, and $\psi=\psi_{B_\psi}$.\\
\end{theorem}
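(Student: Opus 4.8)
The plan is to prove the two directions separately, with Theorem~\ref{Th:minimal1} (maximal $S$-free sets are polyhedra having a point of $S$ in the relative interior of each facet, together with its ``furthermore'' clause on recession cones) as the principal tool.

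\emph{Forward direction.} Let $B$ be a maximal $S$-free set with $f\in\intr(B)$, written as in~\eqref{eq:unique-rep-B}. Validity of $\psi_B$ is immediate: if $(x,s)$ is feasible for~\eqref{mod:cont} then $x\in S$, so $x\notin\intr(B)$, i.e.\ $a_{i^*}(x-f)\ge 1$ for some $i^*\in I$; using $s\ge 0$ and $\psi_B(r)\ge a_{i^*}r$ we get $\sum_r\psi_B(r)s_r\ge\sum_r a_{i^*}r\,s_r=a_{i^*}(x-f)\ge 1$. For minimality I would take an arbitrary valid $\psi'\le\psi_B$ and show $\psi'(r)\ge a_i r$ for every $i\in I$ and $r\in\R^n$, which forces $\psi'\ge\psi_B$, hence $\psi'=\psi_B$. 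Fix $i,r$. By Theorem~\ref{Th:minimal1} choose $\bar x\in S$ in the relative interior of the $i$-th facet, so $a_i(\bar x-f)=1$ and $a_j(\bar x-f)<1$ for $j\ne i$; then by continuity, for $\lambda>0$ small the vector $r'':=\bar x-f-\lambda r$ satisfies $\psi_B(r'')=a_i r''=1-\lambda a_i r$. The point $x=\bar x$ with $s_r=\lambda$, $s_{r''}=1$ (all other entries zero) is feasible, so validity of $\psi'$ gives $\lambda\psi'(r)+\psi'(r'')\ge 1$; since $\psi'(r'')\le\psi_B(r'')=1-\lambda a_i r$, dividing by $\lambda$ gives $\psi'(r)\ge a_i r$. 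The degenerate cases $r''=r$ and $r=0$ are checked directly.

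\emph{Converse direction.} Let $\psi$ be minimal valid for~\eqref{mod:cont}. I would first show $\psi$ is sublinear with $\psi(0)=0$: replacing $\psi$ by $\tilde\psi(r):=\inf\{\sum_k\lambda_k\psi(r_k):\lambda_k>0,\ \sum_k\lambda_k r_k=r\}$ (finite sums) yields a positively homogeneous, subadditive function with $\tilde\psi\le\psi$ that is still valid, since any feasible solution can be rewritten --- without changing $x$ --- using scaled copies of the rays $r_k$; minimality then forces $\psi=\tilde\psi$. Finiteness of $\tilde\psi$ and $\psi(0)=0$ follow from routine arguments (in particular $\psi\ge 0$ on $\rec(\conv(S))$, obtained by appending integer recession rays to a feasible solution, and discarding zero rays). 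A finite sublinear $\psi$ is continuous, so $B_\psi=\{x:\psi(x-f)\le 1\}$ is closed and convex with $f\in\intr(B_\psi)$, and it is $S$-free: if $\bar x\in S\cap\intr(B_\psi)$ then $\psi(\bar x-f)<1$, contradicting $\psi(\bar x-f)\ge 1$ (single-ray feasibility; note $\bar x\ne f$ since $f\notin S$).

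Finally I would show $B_\psi$ is maximal $S$-free and $\psi=\psi_{B_\psi}$. Extend $B_\psi$ to a maximal $S$-free set $B'$; by Theorem~\ref{Th:minimal1} $B'$ is a polyhedron with $f\in\intr(B')$, so by the forward direction $\psi_{B'}$ is minimal valid, and $B_{\psi_{B'}}=B'$ by construction. Thus it suffices to prove $\psi_{B'}\le\psi$: then minimality of $\psi$ gives $\psi=\psi_{B'}$, hence $B_\psi=B'$, yielding both conclusions. Writing $B'=\{x:a_i(x-f)\le 1,\ i\in I'\}$, each $a_i(x-f)\le 1$ is valid for $B_\psi$, i.e.\ $a_i y\le 1$ whenever $\psi(y)\le 1$; by positive homogeneity this gives $a_i r\le\psi(r)$ whenever $\psi(r)\ge 0$, and only $a_i r\le 0$ whenever $\psi(r)<0$. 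The main obstacle is precisely this gap: on the cone $\{\psi<0\}$ --- which lies in $\rec(B_\psi)\setminus\lin(B_\psi)$, since $\psi$ vanishes on $\lin(B_\psi)$ and is nonnegative on $\rec(\conv(S))$ --- one must establish $\psi_{B'}\le\psi$ separately. Here I expect to invoke the ``furthermore'' clause of Theorem~\ref{Th:minimal1} to show that the unbounded recession directions of $B_\psi$ lying outside $\rec(\conv(S))$ coincide with the corresponding ones of $B'$ and that $\psi,\psi_{B'}$ agree there as linear functions. Making this precise --- and establishing the prerequisite that every $S$-free convex set is contained in a maximal one --- is where the real work of the converse lies.
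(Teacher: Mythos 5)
First, note that this paper does not prove Theorem~\ref{Th:minimal2} at all: it is imported verbatim from~\cite{bccz2}, so there is no in-paper argument to compare against. Judged on its own terms, your forward direction is complete and is the standard argument: validity from $x\notin\intr(B)$ plus $s\ge 0$, and minimality by placing a point $\bar x\in S$ in the relative interior of each facet (Theorem~\ref{Th:minimal1}) and using the two-ray feasible solution $s_r=\lambda$, $s_{\bar x-f-\lambda r}=1$ to force $\psi'(r)\ge a_ir$. That part is fine.

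The converse, however, has a genuine gap, and it sits exactly where you say ``the real work lies'' --- which means the proof is not done. From $B_\psi\subseteq B'=\{x\st a_i(x-f)\le 1,\ i\in I'\}$ and positive homogeneity you only get $a_ir\le\psi(r)$ when $\psi(r)\ge 0$ and $a_ir\le 0$ when $\psi(r)<0$; the needed inequality $a_ir\le\psi(r)$ on the cone $\{\psi<0\}$ (the interior of $\rec(B_\psi)$, where $B_\psi$ may a priori not even be polyhedral) does not follow from anything you have established. Note that validity cannot be brought to bear in the obvious way: for any feasible solution using a ray $\bar r$ with weight $t$ and any completion, subadditivity already gives $t\psi(\bar r)+\psi(x-f-t\bar r)\ge\psi(x-f)\ge 1$, so no violation can be produced merely from $\psi(\bar r)$ being ``too negative''; and minimality of $\psi$ only yields conclusions once you exhibit a valid function below $\psi$, which is precisely what is missing. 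Your proposed fix via the ``furthermore'' clause of Theorem~\ref{Th:minimal1} is also doubtful as stated: that clause constrains $\rec(B\cap\conv(S))$ relative to $\lin(B)$, i.e.\ recession directions \emph{inside} $\conv(S)$, whereas the problematic directions are recession directions of $B_\psi$ lying \emph{outside} $\rec(\conv(S))$ (you showed $\psi\ge 0$ on $\rec(\conv(S))$), so it is not clear the clause says anything about them, nor that ``$\psi$ and $\psi_{B'}$ agree there as linear functions'' is even meaningful before one knows $B_\psi$ is a polyhedron. In addition, two supporting claims are asserted but not proved: the existence of a maximal $S$-free convex set containing $B_\psi$ (Zorn plus the fact that the union of a chain of $S$-free convex sets is $S$-free), and finiteness of your homogenized function $\tilde\psi$; these are comparatively routine, but the $\{\psi<0\}$ case is the substantive content of the converse in~\cite{bccz2} and is exactly what your outline leaves open.
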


Theorem~\ref{Th:minimal2} extends results from two seminal papers by Balas~\cite{balas} and Andersen et al.~\cite{alww} to the infinite model~\eqref{mod:cont}. 

It follows easily from the formula  in \eqref{eq:psiB} and Theorem~\ref{Th:minimal2} that minimal valid functions for~\eqref{mod:cont} are {\em sublinear}. In particular, $\psi$ is subadditive, i.e., $\psi(r_1)+\psi(r_2)\ge \psi(r_1+r_2)$ for all $r_1, r_2\in \R^n$ (see also \cite{bccz2}).

\subsection{Minimal Lifting Functions}

This paper has two main contributions, which we state next. Given a
minimal valid function $\psi$ for \eqref{mod:cont},  $B_{\psi}$ defined in Theorem~\ref{Th:minimal2} is a maximal $S$-free convex
set containing $f$ in its interior.
Following \eqref{eq:unique-rep-B}-\eqref{eq:psiB}, it can be uniquely written
as $B_\psi=\{x\in\R^n\st a_i(x-f)\leq 1,\, i\in I\}$ and so $\psi(r)=\max_{i\in I} a_ir$.
Given $x\in \R^n$, let
\begin{equation*}\label{eq:Rx}R(x):=\{r\in\R^n\st
\psi(r)+\psi(x-f-r)=\psi(x-f)\}.\end{equation*}
We define $$R_\psi:=\{r\in\R^n\st \pi(r)=\psi(r) \mbox{ for all minimal liftings } \pi \mbox{ of } \psi\}.$$
\begin{theorem}\label{thm:main}
Let $\psi$ be a minimal valid function  for \eqref{mod:cont}. Then
$
\displaystyle R_\psi=\bigcup_{x\in S\cap B_\psi} R(x).
$
\end{theorem}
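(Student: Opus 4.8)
The plan is to establish the two inclusions separately. For the easy direction, $\bigcup_{x\in S\cap B_\psi}R(x)\subseteq R_\psi$: take $r\in R(x)$ for some $x\in S\cap B_\psi$ and let $\pi$ be an arbitrary minimal lifting of $\psi$. Since $(\psi,\pi)$ is valid for \eqref{mod:int} and minimal liftings satisfy $\pi\le\psi$, it suffices to show $\pi(r)\ge\psi(r)$. The idea is to plug a suitable solution into \eqref{mod:int}: using $x\in S$, write $x = f + 1\cdot r_1\text{(continuous)} + 1\cdot r_2\text{(integer)}$-type combinations with $r_1 = x-f-r$ taken as a continuous ray and $r$ taken as an integer ray with $y_r=1$; validity gives $\psi(x-f-r)+\pi(r)\ge 1$. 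On the other hand $x\in B_\psi$ means $\psi(x-f)\le 1$, and $r\in R(x)$ gives $\psi(r)+\psi(x-f-r)=\psi(x-f)\le 1$, hence $\psi(r)\le 1-\psi(x-f-r)\le\pi(r)$. Combined with $\pi(r)\le\psi(r)$ this yields $\pi(r)=\psi(r)$, so $r\in R_\psi$.

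The harder direction is $R_\psi\subseteq\bigcup_{x\in S\cap B_\psi}R(x)$, equivalently: if $r^*\notin\bigcup_{x\in S\cap B_\psi}R(x)$, then some minimal lifting $\pi$ of $\psi$ has $\pi(r^*)<\psi(r^*)$. The strategy is to construct a valid function $(\psi,\pi_0)$ explicitly with $\pi_0(r^*)<\psi(r^*)$, and then invoke the Zorn's-lemma fact from the introduction to dominate $\pi_0$ by a minimal lifting $\pi\le\pi_0$; since also $\pi\le\psi$ automatically and $\pi(r^*)\le\pi_0(r^*)<\psi(r^*)$, this $r^*$ is not in $R_\psi$. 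The candidate to try is something like
\[
\pi_0(r)=\min\Bigl\{\psi(r),\ \inf_{w\in\Z^n\cap\lin(\conv(S))}\bigl(\psi(r+w)\bigr),\ \min_{x\in S\cap B_\psi}\bigl(\psi(x-f)-\psi(x-f-r)+\alpha\,[\text{terms forcing }\pi_0(r^*)\text{ down}]\bigr)\Bigr\},
\]
but more realistically one builds $\pi_0$ by "lifting one ray at a time": set $\pi_0(r^*)$ equal to the largest value strictly less than $\psi(r^*)$ for which validity is still provable, and define $\pi_0$ on all other rays via a subadditive/convolution formula generated by $\psi$ together with this one new value, e.g.
\[
\pi_0(r)=\min_{k\in\Z_+,\ w\in\Z^n\cap\lin(\conv(S))}\ \inf\Bigl\{\psi\bigl(r-kr^*-w\bigr)+k\,\pi_0(r^*)\Bigr\}.
\]
One must then verify that $(\psi,\pi_0)$ is valid for \eqref{mod:int}: given any feasible $(x,s,y)$, use the relation $x-f=\sum r s_r+\sum r y_r$, peel off the $y_{r^*}$ copies of $r^*$ and any integer translations in $\lin(\conv(S))$, and reduce to validity of $\psi$ for \eqref{mod:cont}, using subadditivity of $\psi$ (established in the excerpt) and the fact that $x$ stays in $S$ after translating by a lattice vector in $\lin(\conv(S))$.

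The main obstacle is showing that the chosen value $\pi_0(r^*)<\psi(r^*)$ can indeed be made to keep validity — i.e. that the "slack" in the system is genuine. This is exactly where the hypothesis $r^*\notin\bigcup_{x\in S\cap B_\psi}R(x)$ is used: it says that for every $x\in S\cap B_\psi$ we have $\psi(r^*)+\psi(x-f-r^*)>\psi(x-f)$, and since $x\in B_\psi$ forces $\psi(x-f)\le 1$, one can extract a uniform $\varepsilon>0$ with $\psi(r^*)+\psi(x-f-r^*)\ge 1+\varepsilon$ over the relevant (finitely many, by Remark~\ref{rmk:exisits-delta} and boundedness arguments) integer points $x$ that can actually appear with positive $y$-coordinate in a tight solution; this $\varepsilon$ is the room by which $\pi_0(r^*)$ can be lowered below $\psi(r^*)$. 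Handling the infinitely many integer points $x\in S$ — reducing to the finitely many that matter, controlling contributions of points of $S$ far from $B_\psi$ using that $\psi\ge$ some positive bound away from $B_\psi$, and dealing with integer translations along $\lin(\conv(S))$ — is the technical heart of the argument.
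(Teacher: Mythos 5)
Your first inclusion, $\bigcup_{x\in S\cap B_\psi}R(x)\subseteq R_\psi$, is correct and is essentially the paper's argument: plugging the solution $s_{x-f-r}=1$, $y_r=1$ into \eqref{mod:int} and combining $\psi(x-f)\le 1$ with $\pi\le\psi$ forces $\pi(r)=\psi(r)$ for every minimal lifting $\pi$. Your overall strategy for the converse is also the right one (show the coefficient of $r^*$ can be strictly lowered while keeping validity, then invoke the Zorn's-lemma fact to get a minimal lifting below it, as in Lemma~\ref{rmk:psi}); as a side remark, the elaborate convolution formula for $\pi_0$ is unnecessary -- keeping $\pi_0(r)=\psi(r)$ for all $r\neq r^*$ already yields a lifting, because variables whose coefficient equals $\psi$ can be treated as continuous, which reduces validity to the single-ray problem \eqref{eq:oneray}.

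The genuine gap is exactly the step you label ``the main obstacle'': you never prove that when $\psi(r^*)+\psi(x-f-r^*)>\psi(x-f)$ for all $x\in S\cap B_\psi$, a strictly smaller coefficient $\lambda<\psi(r^*)$ remains valid for \eqref{eq:oneray}. Asserting a ``uniform $\varepsilon$ over the relevant (finitely many) integer points'' does not work as stated: $S\cap B_\psi$ is in general infinite (e.g.\ when $B_\psi$ is a split or a wedge), and, more importantly, validity for \eqref{eq:oneray} must hold for all integer values $y_{r^*}=k$, i.e.\ for all points $\binom{x}{k}\in S\times\Z_+$ relative to the set $B(\lambda,r^*)$; when $\lambda$ is decreased by $\varepsilon$ the slack of such a point shrinks by $\varepsilon k$, so a uniform $\varepsilon$ helps only if the relevant $k$ are bounded, and points of $S\times\Z_+$ outside $B(\psi(r^*),r^*)$ may come arbitrarily close to its boundary unless one has a uniform separation. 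This is precisely the content of the paper's Theorem~\ref{lemma:block-point}: its proof needs Lemma~\ref{lemma:B-lambda-max} so that Remark~\ref{rmk:exisits-delta} yields a uniform $\delta>0$ for $B(\lambda^*,r^*)$, a Farkas'-lemma argument (or, in the unbounded case, a rational recession direction with positive last coordinate, which directly produces a tight integer point) to bound the last coordinate by some $M$, and then the choice $\varepsilon M\le\delta$. None of these ingredients appears in your sketch, so the claim $\pi^*(r^*)<\psi(r^*)$ -- the heart of the hard inclusion, carried in the paper by Theorem~\ref{lemma:block-point} together with Proposition~\ref{prop:equivalence} -- is left unproved.
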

Figure~\ref{fig:regions} illustrates the region $R_\psi$ for several examples.
\begin{figure}%[htbp]
\centering \subfigure[A maximal $\Z^2$-free triangle with three integer points] {\label{fig:region1}
\includegraphics[height=2.5in]{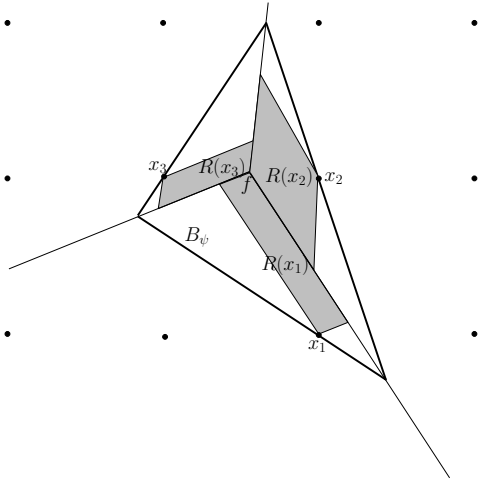}}
\hspace{0.5in}\subfigure[A maximal $\Z^2$-free triangle with integer
vertices] {\label{fig:region4}
\includegraphics[height=2.0in]{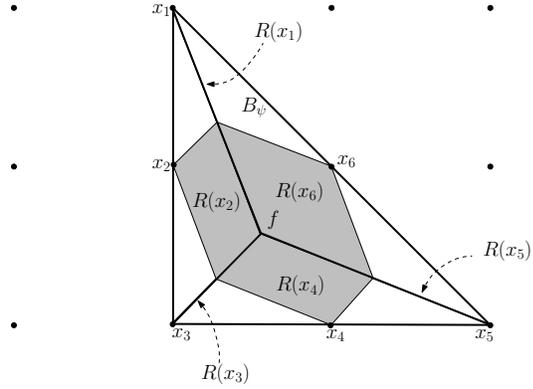}}
\hspace{0.5in}\subfigure[A wedge] {\label{fig:region2}
\includegraphics[height=2.5in]{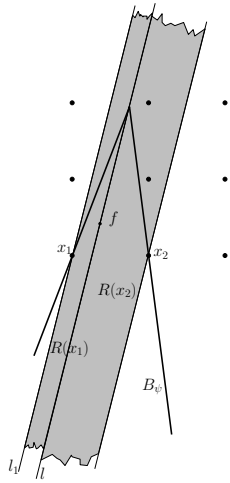}}
\hspace{0.5in} \subfigure[A truncated wedge] {\label{fig:region3}
\includegraphics[height=3.0in]{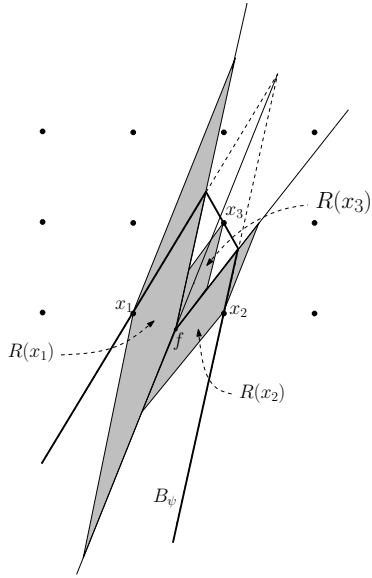}} \caption{Regions $R(x)$ for some
maximal $S$-free convex sets $B$ in the plane and for $x \in S \cap B$. The thick
dark line indicates the boundary of
$B_\psi$. For a particular $x$, the dark gray regions denote $R(x)$ translated by $f$. The jagged
lines in a region indicate that it extends to infinity. For example,
in Figure~\ref{fig:region2}, $R(x_1)$ is the strip
between lines $l_1$ and $l$. Figure~\ref{fig:region4} shows an
example where $R(x)$ is full-dimensional for $x_2, x_4, x_6$, but is
not full-dimensional for $x_1, x_3, x_5$.}\label{fig:regions}
\end{figure}

The second contribution is to give a necessary and sufficient condition for the existence of a {\em unique} minimal lifting function in the case $S=\Z^n$.

\begin{theorem}\label{thm:main2}
Let $S=\Z^n$, and let $\psi$ be a minimal valid function for \eqref{mod:cont}. There exists a unique minimal
lifting $\pi$ of $\psi$ if and only if $R_\psi +
\Z^n=\R^n$.
\end{theorem}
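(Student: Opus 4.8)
Since $S=\Z^n$ we have $\conv(S)=\R^n$, so $\Z^n\cap\lin(\conv(S))=\Z^n$; hence, by the remark recalled in the introduction, every minimal lifting $\pi$ of $\psi$ is $\Z^n$-periodic. Assume $R_\psi+\Z^n=\R^n$. Given $r\in\R^n$, write $r=r'+w$ with $r'\in R_\psi$, $w\in\Z^n$. For every minimal lifting $\pi$ we get $\pi(r)=\pi(r')=\psi(r')$, the first equality by periodicity and the second by the definition of $R_\psi$. Thus $\pi(r)$ does not depend on $\pi$; since a minimal lifting exists (Zorn's Lemma, as noted in the introduction), it is unique.

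\textbf{Tools for the converse.} For the reverse implication I prove the contrapositive, and I will use some closure properties of valid liftings, all proved by simple surgeries on feasible points of \eqref{mod:int}: (i) replacing the values of a valid lifting on an arbitrary set by the corresponding values of $\psi$ keeps it valid (move the affected integer weights into the continuous variables); (ii) for $w\in\Z^n$, if $\pi$ is valid then so is $r\mapsto\pi(r+w)$ (replace a ray $r$ by $r+w$ with the same integer multiplicity, moving the basic point by a lattice vector); (iii) the pointwise infimum of a chain of valid liftings is valid, so Zorn applies inside any class of valid liftings closed under such infima. Combining (i)--(ii): writing $\psi^{\circ}:=\min_{w\in\Z^n}\psi(\,\cdot\,+w)$, the function $\psi^{\circ}$ is a $\Z^n$-periodic valid lifting with $\psi^{\circ}\le\psi$, and for every valid $\pi$ both $\min(\pi,\psi^{\circ})$ and $\pi^{\circ}:=\min_{w\in\Z^n}\pi(\,\cdot\,+w)$ are valid; in particular every minimal lifting is $\Z^n$-periodic and $\le\psi^{\circ}\le\psi$. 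Next, by Theorem~\ref{Th:minimal1} $\rec(B_\psi)=\lin(B_\psi)$ (here $B_\psi\cap\conv(S)=B_\psi$ is full dimensional), so if $\psi(r)<0$ then $a_ir<0$ for all $i$, whence $f+\R_+r\subseteq B_\psi$, i.e. $r\in\rec(B_\psi)=\lin(B_\psi)$, forcing $a_ir=0$ for all $i$ and $\psi(r)=0$, a contradiction; hence $\psi\ge 0$, and therefore $\psi$ attains its minimum over every translate of $\Z^n$. Finally, $R_\psi=\bigcup_{x\in S\cap B_\psi}R(x)$ (Theorem~\ref{thm:main}) is a \emph{finite} union of polyhedra: each $R(x)=\{r:a_ir+a_j(x-f-r)\le\psi(x-f)\ \forall i,j\in I\}$ is a polyhedron (subadditivity of $\psi$ supplies the reverse inequality for free), and $R(x)=R(x')$ whenever $x-x'\in\lin(B_\psi)$ since $\psi$ is $\lin(B_\psi)$-periodic; by Theorem~\ref{Th:minimal1} only finitely many such cosets meet $B_\psi$. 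So $R_\psi+\Z^n$ is closed, and moreover $0\in\intr(R_\psi)$: if $x_i\in S$ lies in the relative interior of the facet of $B_\psi$ with normal $a_i$, then near the origin $R(x_i)=\{r:a_ir=\psi(r)\}$, and $\bigcup_i\{r:a_ir=\psi(r)\}=\R^n$.

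\textbf{Construction of two minimal liftings.} Suppose $R_\psi+\Z^n\ne\R^n$. By the above, $\R^n\setminus(R_\psi+\Z^n)$ is open, nonempty, and bounded away from $\Z^n$. Pick $r_0$ in it; its orbit $O:=r_0+\Z^n$ is disjoint from $R_\psi$. Let $\bar r\in O$ minimize $\psi$ over $O$ and set $m:=\psi(\bar r)=\min_O\psi$, so that $\psi^{\circ}(\bar r)=m$. Since $\bar r\notin R_\psi$ and minimal liftings are $\le\psi$, there is a minimal lifting $\pi_1$ with $\pi_1(\bar r)<m$. Let $\mathcal C$ be the class of $\Z^n$-periodic valid liftings that equal $m$ on $O$; it is nonempty (take any minimal lifting and raise its value on $O$ up to $m$: this value is $\le\psi(\bar r)=m$, and raising preserves validity and periodicity) and closed under chain-infima, so by Zorn it has a minimal element $\pi_2$. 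Because $\psi^{\circ}(\bar r)=m$, we have $\min(\pi_2,\psi^{\circ})\in\mathcal C$, hence $\pi_2\le\psi^{\circ}\le\psi$. I claim $\pi_2$ is a \emph{globally} minimal lifting; granting this, $\pi_1$ and $\pi_2$ are two distinct minimal liftings of $\psi$ (they differ at $\bar r$), so $\psi$ has no unique minimal lifting.

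\textbf{Why $\pi_2$ is minimal, and the main obstacle.} Let $\pi'\lneq\pi_2$ be valid. Replacing $\pi'$ successively by $\min(\pi',\psi^{\circ})$ and by $(\cdot)^{\circ}$ keeps it valid and strictly below $\pi_2$, so we may assume $\pi'$ is $\Z^n$-periodic, $\le\psi^{\circ}$, and $\pi'\lneq\pi_2$. If $\pi'(\bar r)=m$ then $\pi'\in\mathcal C$, contradicting minimality of $\pi_2$ in $\mathcal C$. Otherwise raise $\pi'$ to $m$ on $O$; the result lies in $\mathcal C$, is $\le\pi_2$, and is $\lneq\pi_2$ \emph{unless} $\pi'$ already agrees with $\pi_2$ off $O$. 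Hence the only way $\pi_2$ could fail to be globally minimal is that one can strictly lower the lifting value on the single orbit $O$ below $m=\min_O\psi$ while holding all other values equal to those of $\pi_2$. Ruling this out is the crux, and I expect it to be the main difficulty: it is a genuinely geometric statement. The ingredients I would use are that $O$ is bounded away from $\Z^n$, the $\delta$-separation of $S\setminus B_\psi$ from $B_\psi$ (Remark~\ref{rmk:exisits-delta}), subadditivity of $\psi$, and the defining property that $O$ avoids $R_\psi$ (equivalently, every $r\in O$ fails the tightness relation $\psi(r)+\psi(x-f-r)=\psi(x-f)$ for all $x\in S\cap B_\psi$); the point is that any feasible point of \eqref{mod:int} landing in $\Z^n$ and using rays of $O$ must pay enough through the continuous variables and the remaining integer rays to keep the left-hand side $\ge 1$ when the orbit values sit at $m$, and this slack disappears once they drop below $m$. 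The preceding reductions are soft; it is this final inequality — that $\min_O\psi$ is the true infimum of the lifting value along $O$ — that carries the real content.
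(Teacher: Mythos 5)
Your ``easy'' direction and your soft reductions (periodicity of minimal liftings, $\psi\ge 0$, $R_\psi$ a finite union of polyhedra, closedness of $R_\psi+\Z^n$, $0\in\intr(R_\psi)$, validity being preserved under raising values, shifting rays by lattice vectors, and chain-infima) are all essentially sound, and they match facts the paper either proves (Proposition~\ref{prop:region}, Lemma~\ref{lemma:closed-set}) or cites. But the hard direction has a genuine gap, and you name it yourself: everything hinges on the claim that the Zorn-minimal element $\pi_2$ of your class $\mathcal C$ is a \emph{globally} minimal lifting, equivalently that no valid lifting can agree with $\pi_2$ off the orbit $O$ while taking a value strictly below $m=\min_O\psi$ on $O$. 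This is not a routine verification left to the reader; it is precisely where all the content of the theorem sits, and it is in fact a \emph{stronger} statement than the theorem (you are asserting that some minimal lifting attains the value $\psi(\bar r)$ at $\bar r$ even though $\bar r\notin R_\psi$, whereas the theorem only asserts two distinct minimal liftings exist). The ingredients you list do not obviously suffice: since $O\cap R_\psi=\emptyset$, for every $x\in S\cap B_\psi$ one has $\psi(\bar r)+\psi(x-f-\bar r)>\psi(x-f)$, i.e.\ exactly the single-continuous-ray certificates that would pin the value at $\bar r$ up to $m$ are unavailable (indeed $\pi^*(\bar r)<\psi(\bar r)$ by Theorem~\ref{thm:main} and Lemma~\ref{rmk:psi}); any forcing must therefore come from feasible points of \eqref{mod:int} that use \emph{other} integer rays at their $\pi_2$-values, and $\pi_2$ is only defined abstractly via Zorn's lemma, so you have no quantitative handle on those values. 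Nothing in your sketch supplies that missing lower bound, and it is not clear the claim is even true as stated.

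The paper takes a genuinely different route that sidesteps this: it first observes that uniqueness of the minimal lifting is equivalent to $\pi^*$ being a lifting, then assumes for contradiction that $R_\psi+\Z^n\ne\R^n$ while $\pi^*$ is a lifting. Under that assumption $\pi^*$ is the unique minimal lifting, hence continuous (Proposition~\ref{prop:pi_continuous}), and a quantitative perturbation argument near a point $\bar r\in R_\psi$ in the closure of the uncovered set $H$ (using Lemma~\ref{claim:boundary}, Lemma~\ref{lemma:bounded-rec}, the blocking-point Theorem~\ref{lemma:block-point}, and the $\delta$-separation of Remark~\ref{rmk:exisits-delta}) produces $p\in H$ such that $B(\pi^*(p),p)$ contains an integral point at height $1$; this forces $\psi(\bar x-f-p)=1-\pi^*(p)$, hence $\bar x-f-p\in R_\psi$ and $p\in R_\psi+\Z^n$, a contradiction. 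If you want to salvage your construction, you would need an analogue of that machinery to control the sustainable value on $O$; as written, the crux is missing. Two smaller points: attainment of $\min_O\psi$ needs more than $\psi\ge0$ (use that $\lin(B_\psi)$ is rational and $\psi$ is coercive on its orthogonal complement, as in Lemma~\ref{lemma:closed-set}), and closedness of $R_\psi+\Z^n$ likewise requires that rationality, not just finiteness of the union.
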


The proofs of Theorems~\ref{thm:main} and~\ref{thm:main2} are given in  Sections~\ref{sec:Thm-lifting-region} and~\ref{sec:unique-lifting}, respectively. We conclude this section with two propositions, used in the proof of  Theorem~\ref{thm:main2}, that are of independent interest. The first gives a geometric description and properties of the regions $R(x)$ and $R_\psi$, while the second (Proposition~\ref{prop:pi_continuous}) states that all minimal liftings are continuous functions.
\begin{prop}\label{prop:region} Let $\psi$ be a minimal valid function
 for \eqref{mod:cont} and let
 $B_\psi =\{x\in \R^n:\;a_i(x-f)\le 1,\;i\in I\}$. Let $L_\psi:=\{r\in \R^n\st a_ir=a_jr \mbox{ for all } i,j\in I \}$. Then
\begin{itemize}
\item[$i)$] For all $x\in\R^n$, $R(x)$ is a polyhedron, namely $R(x)=\{r\in \R^n\st a_ir+a_j(x-f-r)\leq \psi(x-f) \mbox{ for all } i,j\in I\}.$
\item[$ii)$] $\rec(R(x))=\lin(R(x))=L_\psi$ for every $x\in \R^n$.
\item[$iii)$]  $R(x)=R(x')$ for every $x,x'\in \R^n$ such that $x-x'\in L_\psi$.
\item[$iv)$] $R_\psi$ is a finite union of polyhedra.
\item[$v)$] $L_\psi$ is contained in the interior of $R_\psi$.
\end{itemize}
\end{prop}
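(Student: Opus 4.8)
The overall approach is to first extract from the formula $\psi(r)=\max_{i\in I}a_ir$ and the subadditivity of $\psi$ an explicit polyhedral description of $R(x)$; parts $i)$--$iii)$ are then quick consequences, and parts $iv)$ and $v)$ combine this description with Theorem~\ref{thm:main} and the structural statements in Theorem~\ref{Th:minimal1}. I expect $iv)$ to be the only real obstacle: the finiteness it asks for needs genuine input from Theorem~\ref{Th:minimal1} to control a possibly unbounded $S\cap B_\psi$, and one must be careful because $L_\psi$ can strictly contain $\lin(B_\psi)$ and $B_\psi$ need not be a rational polyhedron.

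For $i)$: picking $i^*\in I$ with $a_{i^*}(x-f)=\psi(x-f)$ gives $\psi(r)+\psi(x-f-r)\ge a_{i^*}r+a_{i^*}(x-f-r)=\psi(x-f)$, so the equation defining $R(x)$ is equivalent to the reverse inequality $\max_i a_ir+\max_j a_j(x-f-r)\le\psi(x-f)$, i.e.\ to the finite system $a_ir+a_j(x-f-r)\le\psi(x-f)$ for all $i,j\in I$; hence $R(x)$ is a polyhedron, and it is nonempty since $r=0$ satisfies it. For $ii)$, writing this system as $(a_i-a_j)r\le\psi(x-f)-a_j(x-f)$, its recession cone is $\{r:(a_i-a_j)r\le0\text{ for all }i,j\in I\}$; as both $(a_i-a_j)r\le0$ and $(a_j-a_i)r\le0$ appear, this equals $\{r:a_ir=a_jr\text{ for all }i,j\in I\}=L_\psi$, which is a subspace, so $\rec(R(x))=\lin(R(x))=L_\psi$. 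For $iii)$, set $b_j(x):=\psi(x-f)-a_j(x-f)$, so $R(x)=\{r:(a_i-a_j)r\le b_j(x)\text{ for all }i,j\}$; if $x-x'\in L_\psi$ then all $a_k(x-x')$ equal a common value $t$, whence $a_k(x-f)=a_k(x'-f)+t$, $\psi(x-f)=\psi(x'-f)+t$, so $b_j(x)=b_j(x')$ for every $j$ and $R(x)=R(x')$.

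For $iv)$, Theorem~\ref{thm:main} gives $R_\psi=\bigcup_{x\in S\cap B_\psi}R(x)$, a union of polyhedra by $i)$, and by $iii)$ each term depends only on the coset $x+L_\psi$; so it suffices to show that $S\cap B_\psi$ meets finitely many cosets of $L_\psi$. Since $S\cap B_\psi\subseteq B_\psi\cap\conv(S)$, this is immediate if the latter set is bounded; otherwise --- and in particular whenever $S=\Z^n$, where $B_\psi\cap\conv(S)=B_\psi$ has nonempty interior --- Theorem~\ref{Th:minimal1} gives that $W:=\rec(B_\psi\cap\conv(S))=\lin(B_\psi\cap\conv(S))$ is a rational subspace with $W\subseteq\lin(B_\psi)\subseteq L_\psi$. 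Then $B_\psi\cap\conv(S)=W+P$ with $P:=(B_\psi\cap\conv(S))\cap W^\perp$ bounded, so $\proj_{W^\perp}(S\cap B_\psi)$ lies in $P$ and, since $S\subseteq\Z^n$ and $W$ is rational, also in the discrete set $\proj_{W^\perp}(\Z^n)$; hence it is finite, so $S\cap B_\psi$ meets finitely many cosets of $W$, a fortiori of $L_\psi$, and $R_\psi$ is a finite union of polyhedra. (The case where $B_\psi\cap\conv(S)$ has empty interior is degenerate and should be dispatched separately; it forces $\psi$ into a trivial form.)

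For $v)$, Theorem~\ref{Th:minimal1} provides, for each facet $F_j$ of $B_\psi$, a point $x_j\in S\cap\relint(F_j)$; in particular $S\cap B_\psi\ne\emptyset$, so for any $x_0$ there $L_\psi=\lin(R(x_0))\subseteq R(x_0)\subseteq R_\psi$. For the interior statement, note that by $ii)$ each $R(x)$, hence $R_\psi$, is invariant under translation by $L_\psi$, so $\intr(R_\psi)$ is too and it suffices to show $0\in\intr(R_\psi)$. Using \eqref{eq:unique-rep-B}, $x_j\in\relint(F_j)$ means $a_j(x_j-f)=1>a_k(x_j-f)$ for all $k\ne j$, so $b_j(x_j)=0$ while $b_k(x_j)>0$ for $k\ne j$; therefore $R(x_j)$ contains $\{r:a_ir\le a_jr\text{ for all }i\}\cap\{r:\|r\|<\epsilon_j\}$ for some $\epsilon_j>0$. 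Taking $\epsilon:=\min_{j\in I}\epsilon_j>0$, any $r$ with $\|r\|<\epsilon$ satisfies $a_jr=\psi(r)=\max_i a_ir$ for some $j$, hence $r\in R(x_j)\subseteq R_\psi$; so a ball around $0$ is contained in $R_\psi$, giving $0\in\intr(R_\psi)$ and therefore $L_\psi\subseteq\intr(R_\psi)$.
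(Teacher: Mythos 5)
Your proof is correct, and parts $i)$--$iii)$ follow the paper's argument almost verbatim. For $iv)$ you also use the paper's key idea (project onto the orthogonal complement of the rational subspace $W=\rec(B_\psi\cap\conv(S))\subseteq\lin(B_\psi)\subseteq L_\psi$ and use that the projection of $\Z^n$ is a lattice while the relevant region projects into a polytope); the only difference is cosmetic --- you project the intersection $B_\psi\cap\conv(S)$ directly, whereas the paper intersects the separate projections of $B_\psi$ and $\conv(S)$, and your variant is if anything slightly cleaner. The one loose end is the case you parenthetically defer, where $B_\psi\cap\conv(S)$ has empty interior, since Theorem~\ref{Th:minimal1}'s ``furthermore'' clause needs that hypothesis; the paper silently ignores this case too. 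It is indeed trivial, but you should say why: empty interior of the intersection means the interiors of $B_\psi$ and $\conv(S)$ are disjoint, so a hyperplane $H$ separates them; each facet of $B_\psi$ has a point of $S$ in its relative interior, that point lies in $B_\psi\cap\conv(S)\subseteq H$, and a relative-interior point of a facet lying in the face $B_\psi\cap H$ forces the whole facet into $H$; distinct facets span distinct hyperplanes, so $|I|=1$, $B_\psi$ is a halfspace, $L_\psi=\R^n$, and $R(x)=\R^n$ for the $S$-point on the facet, so $iv)$ and $v)$ hold trivially. The genuine departure from the paper is in $v)$: the paper simply cites~\cite{ccz} for the fact that $R_\psi$ contains the origin in its interior and then translates by $L_\psi$, whereas you reprove this fact from scratch, using the points $x_j\in S\cap\relint(F_j)$ guaranteed by Theorem~\ref{Th:minimal1} to show that $R(x_j)$ contains a small ball intersected with the cone $\{r\st a_ir\le a_jr \mbox{ for all } i\in I\}$, and that these cones cover $\R^n$. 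This costs a few extra lines but makes the proposition self-contained modulo Theorem~\ref{thm:main} (whose proof does not rely on this proposition, so there is no circularity), which is a reasonable trade-off against the paper's shorter citation-based route.
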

\begin{proof}
$i)$ Given $x\in \R^n$, let $h\in I$ such that $\psi(x-f)=a_h(x-f)$. Then, for all $r\in\R^n$,
$$\psi(x-f)=a_h (x-f)=a_h r+a_h (x-f-r)\leq \max_{i\in I}a_i r+\max_{i\in I}a_i(x-f-r)=\psi(r)+\psi(x-f-r).$$
It follows from the above and from the definition of $R(x)$ that $r\in R(x)$ if and only if $\max_{i\in I}a_i r+\max_{i\in I}a_i(x-f-r)\leq \psi(x-f)$. That is, $r\in R(x)$ if and only if $a_ir+a_j(x-f-r)\leq \psi(x-f)$ for all $i,j\in I$.
\medskip

\noindent $ii)$ Let $x\in \R^n$. By $i)$, a vector $r\in\R^n$ belongs to $\rec(R(x))$ if and only if $a_ir-a_jr\leq 0$ for all $i,j\in I$. The latter condition is verified if and only if, for all $i,j\in I$, $r$ satisfies $a_ir-a_jr\leq 0$ and $a_jr-a_ir\leq 0$, that is, if and only if $r\in L_\psi$. This shows that $\lin(R(x))=\rec(R(x))=L_\psi$.\medskip

\noindent $iii)$ If $x-x'\in L_\psi$, then, for all $i\in I$, $a_i(x-x')=\alpha$ for some constant $\alpha$. For all $r\in \R^n$, it follows that $a_i (x-f-r)=a_i(x-x')+a_i(x'-f-r)=\alpha+a_i(x'-f-r)$, $i\in I$. Choosing $r=0$, we also have that $a_i(x-f)=\alpha+a_i(x'-f)$ for all $i \in I$ and so $\psi(x-f) = \psi(x'-f)$. Therefore, for all $i,j\in I$,
$a_ir+a_j(x-f-r)\leq \psi(x-f)$ if and only if $a_ir+a_j(x'-f-r)\leq \psi(x'-f)$. By $i)$, this implies that  $r\in R(x)$ if and only if $r\in R(x')$.
\medskip

\noindent $iv)$ Let $B := B_\psi$ and let $L:=\rec(B\cap \conv(S))$. By Theorem~\ref{Th:minimal1}, $L$ is a rational subspace contained in $\lin(B)$. Since $\lin(B)=\{r\in \R^n\st a_ir=0 \mbox{ for all } i\in I \}$, we have that $L\subseteq \lin(B)\subseteq L_\psi$.

Let $P$ be the projection of $B$ onto the orthogonal complement $L^\bot$ of $L$, and let $Q$ be the projection of $\conv(S)$ onto $L^\bot$. Since $L=\rec(B\cap\conv(S))$, it follows that $P\cap Q$ is a polytope.

Given two elements $x,x'\in S$ whose orthogonal projections onto $L^\bot$ coincide, it follows that $x-x'\in L\subseteq L_\psi$, and therefore by $iii)$ $R(x)=R(x')$. It follows that the number of sets $R(x)$, $x\in S\cap B$, is at most the cardinality of the orthogonal projection $\tilde S$  of $S\cap B$ onto $L^\bot$.

Let $\Lambda$ be the orthogonal projection of $\Z^n$ onto $L^\bot$. Since $L$ is a rational space, it follows that $\Lambda$ is a lattice (see for example~\cite{barv}). In particular, there exists $\varepsilon>0$ such that $\|y-z\|\geq \varepsilon$ for all $y,z\in \Lambda$. Since $P\cap Q$ is a polytope, it follows that $P\cap Q\cap \Lambda$ is finite. Since $\tilde S\subseteq P\cap Q\cap \Lambda$, it follows that $\tilde S$ is a finite set.

We conclude that the family of polyhedra $R(x)$, $x\in S\cap B$, has a finite number of elements, thus $R_\psi=\bigcup_{x\in S \cap B}R(x)$ is the union of a finite number of polyhedra.
\medskip

\noindent $v)$ By $ii)$, for every $r\in R_\psi$, $\{r\}+L_\psi$ is contained in $R_\psi$. It was proved in~\cite{ccz} that $R_\psi$ contains the origin in its interior. That is, there exists $\varepsilon>0$ such that $r\in R_\psi$ for all $r$ such that $\|r\|< \varepsilon$. It then follows that  $\{r\st\|r\|<\varepsilon\}+L_\psi\subseteq R_\psi$, hence $L_\psi$ is contained in the interior of $R_\psi$.
\end{proof}

We observe that Proposition~\ref{prop:region} i) implies that, for every $x\in\R^n$, the region $R(x)$ is the intersection of a polyhedral cone $C$ with the translation of $-C$ by $x-f$. More formally, let $k\in I$ such that $\psi(x-f)=a_k(x-f)$, and let $C:=\{r\in\R^n\st (a_i-a_k)r\leq 0,\, i\in I\}$. We claim that $R(x)=C\cap ((x-f)-C)$, where $(x-f)-C:=\{r\st x-f-r\in C\}$. Observe that $(x-f)-C=\{r\st (a_i-a_k)(x-f-r)\leq 0,\, i\in I\}$. To show  $R(x)\subseteq C\cap ((x-f)-C)$, note that, for all $i\in I$, the inequalities $(a_i-a_k)r\leq 0$ and $(a_i-a_k)(x-f-r)\leq 0$ are equivalent to the inequalities $a_ir+a_k(x-f-r)\leq \psi(x-f)$ and $a_kr+a_i(x-f-r)\leq \psi(x-f)$, respectively. To show $ C\cap ((x-f)-C)  \subseteq R(x)$, note that, for all $i,j\in I$, the inequality $a_ir-a_j(x-f-r)\le \psi(x-f)$ is the sum of the inequalities $(a_i-a_k)r\leq 0$ and  $(a_j-a_k)(x-f-r)\leq 0$.

\begin{prop}\label{prop:pi_continuous}
Let $\psi$ be a minimal valid function for \eqref{mod:cont}. Every minimal lifting for $\psi$ is a continuous function.
\end{prop}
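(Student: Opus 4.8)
The plan is to show continuity of a minimal lifting $\pi$ by proving it is both upper and lower semicontinuous at every point $r_0 \in \R^n$. Since $(\psi,\pi)$ is minimal and valid, $\pi$ satisfies the subadditivity-type relations coming from the model \eqref{mod:int}; in particular, for any $r \in \R^n$ we have $\pi(r) \le \psi(r - r') + \pi(r')$ for all $r' \in \R^n$ (obtained by combining a continuous step $r - r'$ with an integer step $r'$), and symmetrically $\pi(r) \le \pi(r - w) + \pi(w)$ when $w$ corresponds to an admissible integer combination. Using $\psi(r-r') \le \psi(r_0 - r') + \psi(r - r_0)$ (subadditivity of $\psi$, established after Theorem~\ref{Th:minimal2}) together with $\psi(r-r_0) \to 0$ as $r \to r_0$ (which follows because $\psi = \psi_{B_\psi}$ is sublinear, hence Lipschitz, since $B_\psi$ has $f$ in its interior), one gets $\limsup_{r\to r_0} \pi(r) \le \pi(r_0)$: take $r'$ near-optimal for $\pi(r_0)$ in the representation $\pi(r_0) = \psi(r_0 - r') + \pi(r')$ and push $r \to r_0$.

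For lower semicontinuity I would argue by contradiction: suppose $r_j \to r_0$ with $\pi(r_j) \to L < \pi(r_0)$. The idea is to build a new lifting $\pi'$ with $\pi' \le \pi$ and $\pi'(r_0) < \pi(r_0)$, contradicting minimality of $\pi$. Concretely, define $\pi'(r) := \min\{\pi(r),\, \liminf_{r' \to r} \pi(r')\}$, i.e. the lower semicontinuous envelope of $\pi$ (capped by $\pi$). Then $\pi' \le \pi$, $\pi'(r_0) \le L < \pi(r_0)$, and it remains to check that $(\psi, \pi')$ is still valid for \eqref{mod:int}; minimality of $\pi$ then forces $\pi' = \pi$, the desired contradiction. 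Alternatively, and perhaps more cleanly, one can use the characterization that $\pi$ being a minimal lifting forces $\pi(r) = \psi(r)$ on $R_\psi$ (a neighborhood of the origin by Proposition~\ref{prop:region}$(v)$ and the fact that $R_\psi$ contains $0$ in its interior), and that $\pi$ is "generated" by its values near $0$ via the subadditive/integer-translation relations — this is essentially the content of the sequence-independence results of~\cite{ccz},~\cite{dw}.

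The main obstacle is the verification that the lower semicontinuous envelope $\pi'$ remains valid for \eqref{mod:int}: validity is an inequality over all finitely-supported $(s,y)$, and lowering $\pi$ on a limit point $r_0$ could in principle violate one of these constraints. To handle this I would fix a feasible $(x,s,y)$ with $y_{r_0} > 0$ (the only case that matters), write the constraint as $\sum \psi(r)s_r + \sum_{r \ne r_0}\pi(r)y_r + \pi'(r_0)y_{r_0} \ge 1$, and approximate: replace the $y_{r_0}$ copies of $r_0$ by $y_{r_0}$ copies of a nearby $r_j$ with $\pi(r_j)$ close to $\pi'(r_0)$, absorbing the small perturbation $y_{r_0}(r_j - r_0)$ into the continuous variables $s$ at a cost that tends to $0$ by Lipschitz continuity of $\psi$; taking $j \to \infty$ and using validity of $(\psi,\pi)$ for the perturbed points gives the inequality for $\pi'$ in the limit. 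One must be slightly careful that the perturbed point still satisfies $x \in S$ — but since we only move mass between continuous and integer slots at the same ray direction and compensate with continuous slack, the point $x$ is unchanged, so this is fine. Once validity of $\pi'$ is in hand, minimality of $\pi$ gives $\pi = \pi'$, hence $\pi$ is lower semicontinuous; combined with the first paragraph, $\pi$ is continuous.
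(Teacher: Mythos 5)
Your argument is correct in substance but follows a genuinely different route from the paper. The paper's proof is a short sandwich argument: it quotes Johnson's result that for a minimal pair $(\psi,\pi)$ the function $\pi$ is subadditive, and the fact from~\cite{ccz} that $\pi$ coincides with $\psi$ on a neighborhood of the origin; then $\pi(r)-\pi(\tilde r)\le \pi(r-\tilde r)=\psi(r-\tilde r)$ and symmetrically, so continuity of $\pi$ is inherited from continuity of $\psi$ at $0$. You instead establish upper semicontinuity from the mixed subadditivity $\pi(r)\le\psi(r-r_0)+\pi(r_0)$ (with $\psi$ Lipschitz since it is a finite max of linear functions), and lower semicontinuity by showing that lowering $\pi$ at a point of failure still gives a valid lifting, contradicting minimality; your swap-and-compensate verification (replace the integer copies of $r_0$ by copies of a nearby $r_j$ and absorb $y_{r_0}(r_0-r_j)$ into a continuous variable, leaving $x$ unchanged) is sound. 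What your route buys is self-containedness: you avoid both external facts used by the paper. What it costs: the inequality $\pi(r)\le\psi(r-r')+\pi(r')$ is \emph{not} a consequence of validity alone --- it requires minimality, proved by exactly the same swap argument you use for lower semicontinuity --- so it should be proved rather than asserted as ``coming from the model''; also, for the lower semicontinuous envelope $\pi'$ you verify validity only when one lowered ray appears in the solution, while a feasible $y$ may put weight on several rays where $\pi'<\pi$ --- either apply the replacement at each of the finitely many such rays (the errors still vanish), or, more simply, lower $\pi$ at the single point $r_0$ only, which already yields the contradiction with minimality.
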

\begin{proof}
Let $\pi$ be a minimal lifting of $\psi$. It means that the function $(\psi,\pi)$ is a minimal valid function for~\eqref{mod:int}.
It is known that, if $(\psi,\pi)$ is a minimal valid function for~\eqref{mod:int}, then $\pi$ is subadditive. The latter fact was shown by Johnson~\cite{johnson} for the case $S=\Z^n$, but the proof for the general case is identical (see also~\cite{survey} for a proof). To prove that $\pi$ is continuous, we need to show that, given $\tilde r\in \R^n$, for every $\delta>0$ there exists $\varepsilon>0$ such that $| \pi(r)-\pi(\tilde r)|<\delta$ for all $r\in\R^n$ satisfying $\|r-\tilde r\|<\varepsilon$. Since $\psi$ is a continuous function, $\psi(0)=0$ and $\psi$ coincides with $\pi$ in some open ball containing the origin, it follows that, for every $\delta>0$, there exists $\varepsilon>0$ such that, for all $r\in\R^n$ satisfying $\|r-\tilde r\|<\varepsilon$, we have $|\psi(r-\tilde r)|<\delta$, $\pi(r-\tilde r)=\psi(r-\tilde r)$, and $\pi(\tilde r-r)=\psi(\tilde r-r)$.
Hence, for all $r\in\R^n$ satisfying $\|r-\tilde r\|<\varepsilon$, $$|\pi(r)-\pi(\tilde r)|\leq \max\{\pi(\tilde r-r),\pi(r-\tilde r)\}=\max\{\psi(r-\tilde r),\psi(\tilde r- r)\}<\delta,$$
where the first inequality follows from the subadditivity of $\pi$, since $\pi(r)\leq \pi(\tilde r)+\pi(r-\tilde r)$ and $\pi(\tilde r)\leq \pi(r)+\pi(\tilde r-r)$.
\end{proof}

\section{Minimum lifting coefficient of a single variable}

Given $r^*\in\R^n$, we consider the set of solutions to

\begin{eqnarray}
&x=&f+\sum_{r\in \R^n} r s_r+ r^* y_{r^*}\nonumber\\
&&x\in S\nonumber\\
&&s\geq 0\label{eq:oneray}\\
&&y_{r^*}\geq 0,\, y_{r^*}\in\Z\nonumber\\
&&s \mbox{ has finite support.}\nonumber
\end{eqnarray}

Given a minimal valid function $\psi$ for~\eqref{mod:cont} and
scalar $\lambda$, we say that the inequality
$\sum_{r\in\R^n}\psi(r)s_r+\lambda y_{r^*}\geq 1$ is valid
for~\eqref{eq:oneray} if it holds for every  $(x,s,y_{r^*})$
satisfying~\eqref{eq:oneray}. We denote by $\pi^*(r^*)$ the minimum
value of $\lambda$ for which $\sum_{r\in\R^n}\psi(r)s_r+\lambda
y_{r^*}\geq 1$ is valid for~\eqref{eq:oneray}.

We observe that,  for {\em any}
lifting $\pi$ of $\psi$, we have
$$\pi^*(r^*)\leq \pi(r^*).$$
Indeed, $\sum_{r\in\R^n}\psi(r) s_r+\pi(r^*)  y_{r^*}\geq 1$ is valid for~\eqref{eq:oneray}, since, for any $(\bar s,\bar y_{r^*})$ satisfying~\eqref{eq:oneray}, the vector $(\bar s,\bar y)$,  defined by $\bar y_r=0$ for all $r\in \R^n\setminus\{r^*\}$, satisfies~\eqref{mod:int}.

It  is easy to show (see~\cite{ccz}) that, if $\psi$ is a minimal valid function for~\eqref{mod:cont} and $\pi$ is a minimal
lifting of $\psi$, then $\pi \leq \psi$.

So, for every minimal valid function $\psi$ for~\eqref{mod:cont} and every minimal lifting $\pi$ of $\psi$, we have the following relation, $$\pi^*(r)\leq \pi(r)\leq \psi(r) \quad\textrm{for all }r
\in \R^n.$$ In general $\pi^*$ is not a lifting of $\psi$, but if it
is, then the above relation implies that it is the unique minimal
lifting of $\psi$.

\begin{lemma}\label{rmk:psi} For any $r\in \R^n$ such that $\pi^*(r)=\psi(r)$, we have $\pi(r)=\psi(r)$ for every minimal lifting $\pi$ of $\psi$. Conversely, if $\pi^*(r^*)<\psi(r^*)$ for some $r^*\in \R^n$, then there exists some minimal lifting $\pi$ of $\psi$ such that $\pi(r^*)=\pi^*(r^*)<\psi(r^*)$.\end{lemma}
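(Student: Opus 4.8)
The plan is to prove the two directions of Lemma~\ref{rmk:psi} more or less directly from the definition of $\pi^*$ and the basic facts about liftings assembled above. Throughout I will use the relations $\pi^*(r)\le \pi(r)\le \psi(r)$ that hold for every minimal lifting $\pi$ of $\psi$.

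\textbf{First direction.} Suppose $\pi^*(r)=\psi(r)$ for some $r\in\R^n$. Then for any minimal lifting $\pi$ of $\psi$ we have $\psi(r)=\pi^*(r)\le\pi(r)\le\psi(r)$, forcing $\pi(r)=\psi(r)$. This direction is immediate and needs no further work; the only thing being used is that every minimal lifting $\pi$ satisfies $\pi\le\psi$ (shown in~\cite{ccz} and quoted above) together with the trivial inequality $\pi^*(r^*)\le\pi(r^*)$ established just before the lemma by the ``extend $\bar y$ by zeros'' argument.

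\textbf{Second direction.} Now suppose $\pi^*(r^*)<\psi(r^*)$ for some fixed $r^*\in\R^n$. I want to produce a minimal lifting $\pi$ of $\psi$ with $\pi(r^*)=\pi^*(r^*)$. The idea is to start from a lifting that already takes the value $\pi^*(r^*)$ at $r^*$, and then pass to a minimal lifting below it without changing the value at $r^*$. Concretely: define $\pi_0:\R^n\to\R$ by $\pi_0(r^*)=\pi^*(r^*)$ and $\pi_0(r)=\psi(r)$ for all $r\ne r^*$. I claim $\pi_0$ is a lifting of $\psi$, i.e. $(\psi,\pi_0)$ is valid for~\eqref{mod:int}. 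To see this, take any $(x,s,y)$ satisfying~\eqref{mod:int}. If $y_{r^*}=0$, validity of $\psi$ for~\eqref{mod:cont} already gives $\sum_r\psi(r)s_r\ge 1$ (here one also uses that the finitely many integer rays with $y_r>0$ can be folded into the continuous part, or simply that $\pi_0=\psi\ge 0$-type monotonicity arguments apply — more carefully, one uses that $(\psi,\psi)$ is valid, which follows since $\psi$ is subadditive and valid for~\eqref{mod:cont}). If $y_{r^*}>0$, the key observation is that $(x,s,y)$ can be viewed as a solution of~\eqref{eq:oneray} after absorbing all the other integer rays $r\ne r^*$ into the continuous variables: replacing each $y_r$ (for $r\ne r^*$) by the continuous amount $s_r \mathrel{+}= y_r$ keeps $x\in S$ and only changes the left-hand side by $\sum_{r\ne r^*}(\psi(r)-\pi_0(r))y_r=0$. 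Hence validity of $\sum_r\psi(r)s_r+\pi^*(r^*)y_{r^*}\ge 1$ for~\eqref{eq:oneray} gives exactly $\sum_r\psi(r)s_r+\pi_0(r^*)y_{r^*}\ge 1$, i.e. $\sum_r\pi_0(r)\,(\text{appropriate variables})\ge 1$ for the original $(x,s,y)$. Thus $\pi_0$ is a lifting of $\psi$. By the Zorn's-Lemma fact quoted in the introduction, there is a minimal lifting $\pi\le\pi_0$ of $\psi$. Then $\pi(r^*)\le\pi_0(r^*)=\pi^*(r^*)$, and the reverse inequality $\pi(r^*)\ge\pi^*(r^*)$ holds because $\pi$ is a lifting; hence $\pi(r^*)=\pi^*(r^*)<\psi(r^*)$, as desired.

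\textbf{Main obstacle.} The delicate point is verifying that $\pi_0$ is a lifting — specifically, handling a solution of~\eqref{mod:int} in which \emph{several} integer rays (including possibly $r^*$ with $y_{r^*}\ge 2$) are active simultaneously, and showing this reduces to the single-ray model~\eqref{eq:oneray}. The argument above — move all integer rays other than $r^*$ into the continuous block, which is legitimate because $\pi_0$ agrees with $\psi$ there and $x\in S$ is unaffected — is the crux, and one must be a little careful that after this reduction we land in a genuine instance of~\eqref{eq:oneray} with the same $r^*$ and the same continuous data (up to the harmless increments), so that the definition of $\pi^*(r^*)$ applies verbatim. The case $y_{r^*}=0$ is then just validity of $(\psi,\psi)$, which is standard. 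Everything else is bookkeeping.
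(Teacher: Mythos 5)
Your proposal is correct and follows essentially the same route as the paper: the first direction from $\pi^*\le\pi\le\psi$, and the second by modifying $\psi$ only at $r^*$ to the value $\pi^*(r^*)$, checking this is a lifting (your folding of the other integer rays into the continuous variables is exactly the justification the paper leaves implicit), and then passing to a minimal lifting below it via the Zorn's Lemma fact.
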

\begin{proof}
The first part follows from $\pi^*\leq \pi\leq \psi$. For the second part,  given $r^*\in \R^n$ such that $\pi^*(r^*)<\psi(r^*)$, we can define a function $\pi'\,:\, \R^n\rightarrow \R$ by $\pi'(r^*)=\pi^*(r^*)$ and $\pi'(r)=\psi(r)$ for all $r\in\R^n$, $r\neq r^*$. Since $\psi$ is valid for~\eqref{mod:cont}, it follows by the definition of $\pi^*(r^*)$ that $\pi'$ is a lifting of $\psi$. As observed in the introduction, there exists a minimal lifting $\pi$ such that $\pi\leq \pi'$. Since $\pi^*\leq \pi$ and $\pi^*(r^*)=\pi'(r^*)$, it follows that $\pi(r^*)=\pi^*(r^*)$.\end{proof}

Next we present a geometric characterization, introduced in~\cite{ccz}, of the function $\pi^*$.
Let $r^*\in \R^n$. Given a maximal
$S$-free convex set $B=\{x\in\R^n\st a_i(x-f)\leq 1,\,\, i\in I\}$, for any $\lambda\in\R$, we define the set $B(\lambda,r^*)\subset\R^{n+1}$ as
follows
\begin{equation}
\label{eq:B-lambda} B(\lambda, r^*)=\{{x \choose x_{n+1}}\in\R^{n+1}\st
a_i(x-f)+(\lambda-a_i r^*)x_{n+1}\leq 1,\; i\in I\}.\end{equation}

Observe that $B(\lambda,r^*)\cap (\R^n\times\{0\})=B\times\{0\}$. For $\lambda=0$, the vector ${r\choose 1}$ belongs to the lineality space of $B(\lambda,r^*)$, thus $B(0,r^*)=(B\times\{0\})+L$ where $L$ is the line of $\R^{n+1}$ in the direction ${r\choose 1}$. For $\lambda\neq 0$, the point ${f\choose 0}+\lambda^{-1}{r\choose 1}$ satisfies at equality all the inequalities $a_i(x-f)+(\lambda-a_i r^*)x_{n+1}\leq 1$, $i\in I$, therefore  $B(\lambda,r^*)$ has a unique minimal face $F$ (i.e., it is the translation of a polyhedral cone), and ${f\choose 0}+\lambda^{-1}{r\choose 1}\in F$. The following theorem, has been proved in~\cite{ccz}.

\begin{theorem}\label{thm:B-lambda} Let $r^*\in \R^n$. Given a maximal
$S$-free convex set $B$ with $f$ in its interior, let $\psi=\psi_B$. Given $\lambda\in\R$, the inequality
$\sum_{r\in\R^n}\psi(r)s_r+\lambda y_{r^*}\geq 1$
 is valid for~\eqref{eq:oneray} if and only if $B(\lambda, r^*)$ is $S\times\Z_+$-free.
\end{theorem}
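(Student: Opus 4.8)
The plan is to prove the equivalence in two directions, translating between validity of the inequality for \eqref{eq:oneray} and the geometry of $B(\lambda,r^*)$. The key dictionary is the observation already recorded before the statement: a point $\binom{x}{x_{n+1}}$ lies in $B(\lambda,r^*)$ precisely when $a_i(x-f)+(\lambda-a_ir^*)x_{n+1}\le 1$ for all $i\in I$, and since $\psi=\psi_B=\max_{i\in I}a_i(\cdot)$, for $x_{n+1}\ge 0$ the point with $x_{n+1}=t\ge0$ lies in $B(\lambda,r^*)$ if and only if $\psi(x-f-tr^*)+\lambda t\le 1$; more precisely $\max_{i\in I}\bigl(a_i(x-f-tr^*)+\lambda t\bigr)\le 1$, and one must be slightly careful because the maximum over $i$ of a sum is not the sum of the maxima — but here the term $\lambda t$ is independent of $i$, so in fact $\max_{i\in I}\bigl(a_i(x-f-tr^*)\bigr)+\lambda t\le 1$, i.e.\ $\psi(x-f-tr^*)+\lambda t\le 1$. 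So membership of integer-valued-last-coordinate points in $B(\lambda,r^*)$ is exactly controlled by $\psi$.

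First I would prove the contrapositive of ``valid $\Rightarrow$ $S\times\Z_+$-free'': suppose $B(\lambda,r^*)$ is not $S\times\Z_+$-free, so there is a point $\binom{\bar x}{\bar x_{n+1}}$ in its interior with $\bar x\in S$ and $\bar x_{n+1}\in\Z_+$. If $\bar x_{n+1}=0$ then $\binom{\bar x}{0}$ interior to $B(\lambda,r^*)$ forces $\bar x$ interior to $B$, contradicting that $B$ is $S$-free; so $\bar x_{n+1}=p$ for some positive integer $p$. Writing $r:=(\bar x-f)/p - \ldots$ — more cleanly, set $y_{r^*}=p$ and choose a single continuous ray: take $s$ supported on the single ray $\rho:=(\bar x-f-pr^*)/1$ with $s_\rho=1$ (or rescale). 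Then $f+\rho s_\rho+r^* p = \bar x\in S$, so $(\bar x,s,y_{r^*})$ satisfies \eqref{eq:oneray}, and $\psi(\rho)s_\rho+\lambda p = \psi(\bar x-f-pr^*)+\lambda p<1$ because the point is in the \emph{interior}. That violates validity. The only subtlety is the boundary case $\rho=0$: then $\bar x-f=pr^*$, and interiority gives $a_i(pr^*)+(\lambda-a_ir^*)p = \lambda p<1$ for all $i$, i.e.\ $\lambda p<1$; taking any ray $\rho'$ with $\psi(\rho')=0$ and large $s_{\rho'}$ still gives $x=\bar x+s_{\rho'}\rho'$, need $\psi$ nonnegative — actually just use $s=0$, giving $\psi$-sum $=0$ and $\lambda p<1$, contradiction. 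So non-$S\times\Z_+$-free implies not valid.

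Conversely, suppose the inequality is not valid: there is $(\bar x,\bar s,\bar y_{r^*})$ satisfying \eqref{eq:oneray} with $\sum_r\psi(r)\bar s_r+\lambda\bar y_{r^*}<1$. Let $p:=\bar y_{r^*}\in\Z_+$. I want to produce an interior point of $B(\lambda,r^*)$ with last coordinate a nonnegative integer and first $n$ coordinates in $S$. The natural candidate is $\binom{\bar x}{p}$. Check membership: for each $i\in I$, $a_i(\bar x-f)+(\lambda-a_ir^*)p = a_i\bigl(f+\sum_r r\bar s_r + r^*p - f\bigr)+(\lambda - a_ir^*)p = \sum_r (a_ir)\bar s_r + \lambda p \le \sum_r \psi(r)\bar s_r + \lambda p<1$, using $a_ir\le\psi(r)$ and $\bar s_r\ge0$. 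This shows $\binom{\bar x}{p}$ satisfies all defining inequalities of $B(\lambda,r^*)$ strictly, hence lies in its interior (the strict satisfaction of every facet inequality of a full-dimensional polyhedron puts a point in the interior), and $\bar x\in S$, $p\in\Z_+$, so $B(\lambda,r^*)$ is not $S\times\Z_+$-free. I expect no real obstacle here; the one thing to state carefully is why strict satisfaction of all the $|I|$ inequalities yields an \emph{interior} point — this holds because $B(\lambda,r^*)$ is a (possibly lower-dimensional in the $x_{n+1}=0$ slice, but as a subset of $\R^{n+1}$ it is full-dimensional since $B$ is and we can perturb $x_{n+1}$) polyhedron described by exactly those inequalities, so its interior is $\{a_i(x-f)+(\lambda-a_ir^*)x_{n+1}<1,\ i\in I\}$. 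The main thing to be vigilant about throughout is the interaction of "maximum over $i$" with the extra $\lambda x_{n+1}$ term, which is harmless precisely because that term does not depend on $i$.
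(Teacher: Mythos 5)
Your argument is correct and complete: the identity $a_i(\bar x-f)+(\lambda-a_ir^*)p=\sum_r(a_ir)\bar s_r+\lambda p$ translates violated solutions of \eqref{eq:oneray} into interior points of $B(\lambda,r^*)$ in $S\times\Z_+$, and conversely an interior point $\binom{\bar x}{p}$ with $p\geq 1$ yields the violating solution $s_{\bar x-f-pr^*}=1$, $y_{r^*}=p$ (the $p=0$ case being excluded by $S$-freeness of $B$). Note that the paper does not prove Theorem~\ref{thm:B-lambda} but cites it from~\cite{ccz}, where the proof is essentially this same correspondence; the only cosmetic simplification to yours is that strict satisfaction of all the (valid) defining inequalities automatically gives an interior point, since that set is open and contained in $B(\lambda,r^*)$, so no discussion of full-dimensionality or perturbing $x_{n+1}$ is needed.
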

In particular, by the above theorem, $\pi^*(r^*)$ is the smallest possible $\lambda$ such that $B(\lambda, r^*)$ is $S\times\Z_+$-free. Observe that, as $\lambda$ gets smaller, the set $B(\lambda,r^*)\cap(\R^n\times\R_+)$ becomes larger, so for $\lambda$ sufficiently small $B(\lambda,r^*)$ will not be $S\times\Z_+$-free. Intuitively, there will be a ``blocking point'' in $S\times \Z_+$ that will not belong to $B(\lambda,r^*)$ as long as $\lambda>\pi^*(r^*)$, it will be on the boundary of $B(\lambda,r^*)$ for  $\lambda=\pi^*(r^*)$, and it will be in the interior for $\lambda<\pi^*(r^*)$. This fact is proven in Theorem~\ref{lemma:block-point} below, and it will play a central role in the proofs of  Theorem~\ref{thm:main} and~\ref{thm:main2}.
The next lemma will be needed in the proof of Theorem~\ref{lemma:block-point} and of Theorem~\ref{thm:main2}.

\begin{lemma}\label{lemma:B-lambda-max}  Let $r^*\in \R^n$, and let $B$ be a maximal $S$-free
convex set with $f$ in its interior. For every $\lambda$ such that $B(\lambda, r^*)$ is
$S\times\Z_+$-free, $B(\lambda, r^*)$ is maximal $S\times\Z_+$-free.
\end{lemma}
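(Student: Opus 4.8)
The plan is to show that $B(\lambda,r^*)$, whenever it is $S\times\Z_+$-free, satisfies the characterization of maximal $S\times\Z_+$-free convex sets given by Theorem~\ref{Th:minimal1}: it must be a polyhedron, not contain any point of $S\times\Z_+$ in its interior, and have a point of $S\times\Z_+$ in the relative interior of each of its facets. The first two properties are immediate: $B(\lambda,r^*)$ is defined by finitely many linear inequalities in \eqref{eq:B-lambda}, hence is a polyhedron, and ``$S\times\Z_+$-free'' is exactly the statement that it has no point of $S\times\Z_+$ in its interior. So the whole content is the facet condition.

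First I would identify the facets of $B(\lambda,r^*)$. Each facet lies on a hyperplane $a_i(x-f)+(\lambda-a_ir^*)x_{n+1}=1$ for some $i\in I$. Since $B$ is maximal $S$-free, by Theorem~\ref{Th:minimal1} each facet of $B$ (corresponding to $i\in I$) contains a point $s_i\in S$ in its relative interior, i.e. $a_i(s_i-f)=1$ and $a_j(s_i-f)<1$ for all $j\neq i$. The key observation is that the point ${s_i\choose 0}$ then lies on the hyperplane $a_i(x-f)+(\lambda-a_ir^*)x_{n+1}=1$ (plug in $x_{n+1}=0$) and satisfies all other defining inequalities of $B(\lambda,r^*)$ strictly (again because $x_{n+1}=0$ and $a_j(s_i-f)<1$). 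Moreover ${s_i\choose 0}\in S\times\Z_+$. So ${s_i\choose 0}$ witnesses that the hyperplane for $i$ really does define a facet of $B(\lambda,r^*)$, and that this facet contains a point of $S\times\Z_+$ in its relative interior. Since every facet of $B(\lambda,r^*)$ arises from some $i\in I$ (the inequalities in \eqref{eq:B-lambda} are the only ones), all facets are accounted for, and the facet condition of Theorem~\ref{Th:minimal1} holds.

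There is one gap to close: I must confirm that the facets of $B(\lambda,r^*)$ are in one-to-one correspondence with $I$, i.e. that no defining inequality becomes redundant when we pass from $B$ to $B(\lambda,r^*)$ — but this is exactly what the point ${s_i\choose 0}$ certifies, since a point satisfying inequality $i$ at equality and all others strictly cannot be removed. I also need the ambient space to be full-dimensional for Theorem~\ref{Th:minimal1} to apply in the stated form: $B(\lambda,r^*)$ is full-dimensional in $\R^{n+1}$ because $B\times\{0\}$ is $n$-dimensional (as $B$ is full-dimensional in $\R^n$, being a maximal $S$-free set with $f$ in its interior) and, for instance, a suitable point with $x_{n+1}\neq 0$ lies in $B(\lambda,r^*)$ for $\lambda$ small enough — or more cleanly, one argues directly that the $n+1$ constraints cannot all be tight on a lower-dimensional set given that the $s_i$ realize $n$-dimensional faces. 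The main obstacle, such as it is, is being careful that the facet-defining structure is preserved; there is no hard estimate here, just the bookkeeping of which inequalities stay active. Hence $B(\lambda,r^*)$ is maximal $S\times\Z_+$-free by Theorem~\ref{Th:minimal1}. $\Box$
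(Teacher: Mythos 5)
Your proposal is correct and follows essentially the same route as the paper: lift the point of $S$ in the relative interior of each facet of $B$ (guaranteed by Theorem~\ref{Th:minimal1}) to the point $\binom{s_i}{0}$ in the relative interior of the corresponding facet of $B(\lambda,r^*)$, and then invoke Theorem~\ref{Th:minimal1} again to conclude maximality. The extra bookkeeping you supply (irredundancy of the inequalities, full-dimensionality — which follows most cleanly from $\binom{f}{0}$ satisfying all inequalities strictly) is just making explicit what the paper leaves implicit.
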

\begin{proof}
Since $B$ is a maximal $S$-free convex set, then by Theorem~\ref{Th:minimal1} each facet of $B$ contains a point $\bar x$ of $S$ in its relative interior. Therefore the corresponding facet of $B(\lambda, r^*)$ contains the point ${\bar x\choose 0}$ in its relative interior. If $B(\lambda, r^*)$ is $S\times\Z_+$-free, by Theorem~\ref{Th:minimal1} it is a maximal $S\times\Z_+$-free convex set.
\end{proof}

\begin{theorem}\label{lemma:block-point} Consider $r^*\in \R^n$ and $\lambda \in \R$.
We have $\lambda = \pi^*(r^*)$  if and only if $B(\lambda,r^*)$ is
$S\times\Z_+$-free and contains a point ${\bar x\choose\bar x_{n+1}}\in S\times \Z_+$ such that $\bar x_{n+1}>0$.
\end{theorem}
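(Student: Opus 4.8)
The plan is to prove the two implications separately, using Theorem~\ref{thm:B-lambda} to translate validity statements into $S\times\Z_+$-freeness statements, and Remark~\ref{rmk:exisits-delta} together with Lemma~\ref{lemma:B-lambda-max} to control what happens as $\lambda$ varies. Throughout, write $\lambda^* := \pi^*(r^*)$.

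For the ``only if'' direction, suppose $\lambda = \lambda^*$. By Theorem~\ref{thm:B-lambda} and the definition of $\pi^*(r^*)$ as a minimum, $B(\lambda^*, r^*)$ is $S\times\Z_+$-free, so only the existence of a blocking point with positive last coordinate needs proof. The idea is to take a decreasing sequence $\lambda_k \uparrow \lambda^*$ from below, i.e. $\lambda_k < \lambda^*$; for each such $\lambda_k$, minimality of $\lambda^*$ means $B(\lambda_k, r^*)$ is \emph{not} $S\times\Z_+$-free, so there is a point $\binom{x^k}{x^k_{n+1}} \in S\times\Z_+$ in its interior. Since $B(\lambda, r^*)\cap(\R^n\times\{0\}) = B\times\{0\}$ and $B$ is $S$-free, no such point can have $x^k_{n+1} = 0$; hence $x^k_{n+1} \geq 1$. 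One then argues this sequence stays bounded: as $\lambda \to \lambda^*$ from below the sets $B(\lambda, r^*)$ shrink toward $B(\lambda^*, r^*)$, and using Remark~\ref{rmk:exisits-delta} applied to the maximal $S\times\Z_+$-free set $B(\lambda^*, r^*)$ (maximal by Lemma~\ref{lemma:B-lambda-max}), points of $(S\times\Z_+)\setminus B(\lambda^*,r^*)$ are at distance at least some $\delta>0$ from it, which forces the interior points of $B(\lambda_k, r^*)$ to lie in a bounded region for $\lambda_k$ close enough to $\lambda^*$. A bounded sequence in the discrete set $S\times\Z_+$ is finite, so some point $\binom{\bar x}{\bar x_{n+1}}$ with $\bar x_{n+1}\geq 1$ recurs infinitely often; passing to the limit in the defining inequalities $a_i(x^k-f) + (\lambda_k - a_i r^*)x^k_{n+1} \leq 1$ (which hold strictly, being in the interior) gives $a_i(\bar x - f) + (\lambda^* - a_i r^*)\bar x_{n+1} \leq 1$ for all $i\in I$, i.e. $\binom{\bar x}{\bar x_{n+1}} \in B(\lambda^*, r^*)$. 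That is the desired blocking point.

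For the ``if'' direction, suppose $B(\lambda, r^*)$ is $S\times\Z_+$-free and contains a point $\binom{\bar x}{\bar x_{n+1}}\in S\times\Z_+$ with $\bar x_{n+1} > 0$. By Theorem~\ref{thm:B-lambda}, validity gives $\lambda \geq \lambda^*$. For the reverse inequality, suppose for contradiction $\lambda > \lambda^*$. Examine the inequalities defining $B(\lambda, r^*)$ versus $B(\lambda^*, r^*)$ at the point $\binom{\bar x}{\bar x_{n+1}}$: the left-hand side of the $i$-th inequality is $a_i(\bar x - f) + (\lambda - a_i r^*)\bar x_{n+1}$, which, since $\bar x_{n+1} > 0$ and $\lambda > \lambda^*$, is \emph{strictly larger} than the corresponding quantity with $\lambda^*$ in place of $\lambda$. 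Hence $a_i(\bar x - f) + (\lambda^* - a_i r^*)\bar x_{n+1} < a_i(\bar x-f) + (\lambda - a_i r^*)\bar x_{n+1} \leq 1$ for every $i \in I$, so $\binom{\bar x}{\bar x_{n+1}}$ lies in the \emph{interior} of $B(\lambda^*, r^*)$. But $B(\lambda^*, r^*)$ is $S\times\Z_+$-free (by Theorem~\ref{thm:B-lambda}, since $\lambda^* = \pi^*(r^*)$ makes the inequality valid), and $\binom{\bar x}{\bar x_{n+1}} \in S\times\Z_+$ — a contradiction. Therefore $\lambda = \lambda^*$.

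The main obstacle is the compactness argument in the ``only if'' direction: producing a single blocking point on the boundary of $B(\lambda^*, r^*)$ rather than merely a sequence of interior violators for $\lambda < \lambda^*$. The delicate point is ruling out that the violating points ``escape to infinity'' as $\lambda\uparrow\lambda^*$; this is exactly what Remark~\ref{rmk:exisits-delta} is designed to handle, provided one first checks that $B(\lambda^*, r^*)$ is indeed maximal $S\times\Z_+$-free via Lemma~\ref{lemma:B-lambda-max}, and that the geometry of how $B(\lambda, r^*)$ depends on $\lambda$ (the facets tilt continuously in $\lambda$, with $x_{n+1}$ fixed) keeps the relevant violators within a bounded neighborhood of $B(\lambda^*, r^*)$. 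I would also need the elementary observation that a violator must have $x_{n+1}\geq 1$, which is immediate from $S$-freeness of $B$ and the slice identity $B(\lambda,r^*)\cap(\R^n\times\{0\}) = B\times\{0\}$.
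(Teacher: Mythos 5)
Your ``if'' direction is correct and is essentially the paper's argument (the paper perturbs $\lambda$ downward by $\epsilon$ rather than comparing against $\lambda^*$, but it is the same computation), and the observation that any violator has $x_{n+1}\geq 1$ is fine. The genuine gap is in the ``only if'' direction, exactly at the step you flag: the claim that Remark~\ref{rmk:exisits-delta}, applied to the maximal $S\times\Z_+$-free set $B(\lambda^*,r^*)$, forces the interior violators of $B(\lambda_k,r^*)$ into a bounded region once $\lambda_k$ is close to $\lambda^*$. This does not follow. A point at height $x_{n+1}=t$ may violate the inequalities of $B(\lambda^*,r^*)$ by as much as $(\lambda^*-\lambda_k)\,t$ and still lie in $B(\lambda_k,r^*)$, because the slack gained in passing from $\lambda^*$ to $\lambda_k$ scales \emph{linearly with the height}; so for each fixed $\lambda_k<\lambda^*$ there is room for violators at heights of order $\delta/(\lambda^*-\lambda_k)$ and beyond, and the distance-$\delta$ gap gives no bound independent of $k$. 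Moreover, violators that happen to lie inside $B(\lambda^*,r^*)$ are not subject to the gap at all and can genuinely be unbounded: for $n=1$, $S=\Z$, $B=[0,1]$, $f=1/2$, $r^*=0$ one has $\lambda^*=0$, and ${0\choose y}$ is an interior violator of $B(\lambda_k,0)$ for every integer $y\geq 1$ and every $\lambda_k<0$. So your sequence of violators may escape to infinity, no point need recur, and the limit step collapses.

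What is actually needed, and what the paper supplies, is a case split on whether $\rec\bigl(B(\lambda^*,r^*)\cap\conv(S\times\Z_+)\bigr)$ contains a ray with positive last coordinate. If it does, the blocking point is constructed directly: by Lemma~\ref{lemma:B-lambda-max} and Theorem~\ref{Th:minimal1} this recession cone is rational, so one may take an \emph{integral} ray ${\bar r\choose \bar r_{n+1}}$ with $\bar r_{n+1}>0$ and add it to a point of $S\cap B$, landing in $S\times\Z_+$ at positive height inside $B(\lambda^*,r^*)$. If it does not, a Farkas argument shows that the same recession condition persists for $B(\lambda^*-\bar\varepsilon,r^*)$ for some fixed $\bar\varepsilon>0$, which yields a uniform bound $M$ on the height of points of $S\times\Z_+$ in $B(\lambda^*-\bar\varepsilon,r^*)$; only then, choosing $\varepsilon\leq\bar\varepsilon$ with $\varepsilon M\leq\delta$, does your $\delta$-gap idea close: an interior violator of $B(\lambda^*-\varepsilon,r^*)$ cannot violate $B(\lambda^*,r^*)$ by $\delta$ or more, hence lies in $B(\lambda^*,r^*)$, and has positive height since $B$ is $S$-free. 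Your sketch is missing both the case distinction and the height bound, and note that the ``bounded choice of violators'' you need is essentially equivalent to the conclusion itself (once a blocking point exists, it is a fixed-height violator for every $\lambda_k<\lambda^*$), so as written the argument is circular at its crucial point.
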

\begin{proof}
We first prove that, if $B(\lambda,r^*)$ is
$S\times\Z_+$-free and contains a point ${\bar x\choose\bar x_{n+1}}\in S\times \Z_+$ such that $\bar x_{n+1}>0$, then $\lambda = \pi^*(r^*)$.

Since $B(\lambda,r^*)$ is $S\times\Z_+$-free, Theorem~\ref{thm:B-lambda} and the definition of $\pi^*$ imply that
$\lambda \geq \pi^*(r^*)$. We claim that ${\bar x\choose\bar x_{n+1}}$ is in the interior of $B(\lambda -\epsilon,r^*)$ for every $\epsilon > 0$. Indeed, for all $i \in I$,
$a_i(\bar x-f)+(\lambda-\epsilon -a_i r^*)\bar x_{n+1} = a_i(\bar x-f)+(\lambda -a_i r^*)\bar x_{n+1}
-\epsilon \bar x_{n+1} < 1$, where the inequality holds because ${\bar x\choose\bar x_{n+1}} \in B(\lambda,r^*)$ implies that $a_i(x-f)+(\lambda -a_i r^*)x_{n+1}\leq 1$ and $\bar x_{n+1}>0$ implies $\epsilon \bar x_{n+1} > 0$. This proves our claim. Since ${\bar x\choose\bar x_{n+1}}$ is in the interior of $B(\lambda -\epsilon,r^*)$ for every $\epsilon > 0$, Theorem~\ref{thm:B-lambda} and the definition of $\pi^*$ imply that $\lambda - \epsilon < \pi^*(r^*)$. Hence $\lambda = \pi^*(r^*)$.
\smallskip

We now prove the converse. Let $\lambda^*=\pi^*(r^*)$.
The definition of $\pi^*$  and Theorem~\ref{thm:B-lambda} imply that
$B(\lambda^*,r^*)$ is $S\times\Z_+$-free. It only remains to show that
$B(\lambda^*,r^*)$ contains a point ${\bar x\choose\bar x_{n+1}}\in S\times \Z_+$ such that $\bar x_{n+1}>0$.

Note that, for all $\lambda\in \R$, $B(\lambda,r^*)\cap (\R^n\times\{0\})=B\times\{0\}$.

We consider two possible cases: either there exists ${\bar r\choose\bar r_{n+1}}\in
\rec(B(\lambda^*, r^*)\cap\conv(S\times\Z_+))$ such that $\bar r_{n+1}>0$, or $\rec(B(\lambda^*, r^*)\cap\conv(S\times\Z_+))=\rec(B\cap\conv(S))\times\{0\}$.
\bigskip

\noindent{\sl Case 1. } There exists ${\bar r\choose\bar r_{n+1}}\in
\rec(B(\lambda^*, r^*)\cap\conv(S\times\Z_+))$ such that $\bar r_{n+1}>0$.
\medskip

Note that $\rec(\conv(S\times\Z_+))=\rec(\conv(S))\times\R_+$, thus $\bar r\in\rec(\conv(S))$. By Lemma~\ref{lemma:B-lambda-max}, $B(\lambda^*, r^*)$ is a maximal
$S\times\Z_+$-free convex set, thus by Theorem~\ref{Th:minimal1}, $\rec(B(\lambda^*, r^*)\cap\conv(S\times\Z_+))$ is rational. Hence, we can choose ${\bar r\choose\bar r_{n+1}}$ integral. Since $B$ is a maximal $S$-free convex set, there exists $\tilde x\in S\cap B$.  Let ${\bar x\choose \bar x_{n+1}}={\tilde  x\choose 0}+{\bar r\choose \bar r_{n+1}}$. Since $\bar r\in\rec(\conv(S))$ and $\tilde x\in S$, it follows that $\bar x\in\conv(S)$. Since $\bar x\in \Z^n$, we conclude that $\bar x\in S$. Furthermore, $\bar x_{n+1}=\bar r_{n+1}$, thus $\bar x_{n+1}\in \Z$ and $\bar x_{n+1}>0$.
Finally, since ${\tilde  x\choose 0}\in B(\lambda^*,r^*)$ and ${\bar r\choose\bar r_{n+1}}\in
\rec(B(\lambda^*, r^*))$, we conclude that ${\bar x\choose \bar x_{n+1}}\in B(\pi^*(r^*),r^*)$.
\bigskip

\noindent{\sl Case 2. } $\rec(B(\lambda^*, r^*)\cap\conv(S\times\Z_+))=\rec(B\cap\conv(S))\times\{0\}$.
\bigskip

\noindent {\bf Claim. } $\exists$ $\bar \varepsilon>0$
such that
$\rec(B(\lambda^*-\bar\varepsilon, r^*)\cap\conv(S\times\Z_+))=\rec(B\cap\conv(S))\times\{0\}$.

\medskip
Since $\conv(S)$ is a polyhedron, $\conv(S)=\{x\in\R^n\st Cx\leq
d\}$ for some matrix $(C,d)$. By assumption, there is no vector ${r\choose 1}$ in
$\rec(B(\lambda^*, r^*)\cap\conv(S\times\Z_+))$. Thus the system
$$\begin{array}{rcll}
a_i r+(\lambda^*-a_ir^*)&\leq&0,&i\in I\\
Cr&\leq &0&
\end{array}$$
is infeasible. By Farkas' Lemma, there exist scalars $\mu_i\geq 0$,
$i\in I$ and a nonnegative vector $\gamma$ such that
\begin{eqnarray*}
\sum_{i\in I}\mu_ia_i+\gamma C&=&0\\
\lambda^*(\sum_{i\in I}\mu_i)-(\sum_{i\in I}\mu_i a_i)r^*&>&0.
\end{eqnarray*}
This implies that there exists some $\bar \varepsilon > 0$ small enough such that
\begin{eqnarray*}
\sum_{i\in I}\mu_ia_i+\gamma C&=&0\\
(\lambda^*-\bar\varepsilon)(\sum_{i\in I}\mu_i)-(\sum_{i\in I}\mu_i a_i)r^*&>&0,
\end{eqnarray*}
thus the system
$$\begin{array}{rcll}
a_i r+(\lambda^*-\bar\varepsilon-a_ir^*)&\leq&0,&i\in I\\
Cr&\leq &0&
\end{array}$$
is infeasible. This implies that
$\rec(B(\lambda^*-\bar\varepsilon, r^*)\cap\conv(S\times\Z_+))=\rec(B\cap\conv(S))\times\{0\}$.
This concludes the proof of the claim.
\medskip

By the previous claim, there exists
$\bar\varepsilon$ such that
$\rec(B(\lambda^*-\bar\varepsilon, r^*)\cap\conv(S\times\Z_+))=\rec(B\cap\conv(S))\times\{0\}$. This implies that there exists a scalar $M$ such that $\bar x_{n+1}\leq M$ for every point ${\bar
x\choose\bar x_{n+1}}\in B(\lambda^*-\bar\varepsilon, r^*)\cap
(S\times\Z_+)$.

Remark~\ref{rmk:exisits-delta} and Lemma~\ref{lemma:B-lambda-max}
imply that there exists $\delta>0$ such that, for every ${\bar
x\choose\bar x_{n+1}}\in (S\times\Z_+)\setminus B(\lambda^*, r^*)$, there
exists $h\in I$ such that $a_h (\bar x-f)+(\lambda^*-a_hr^*)\bar
x_{n+1}\geq 1+\delta$. Choose $\varepsilon>0$ such that
$\varepsilon\leq\bar\varepsilon$ and $\varepsilon M\leq \delta$.

Since $\pi^*(r^*)=\lambda^*$,  Theorem~\ref{thm:B-lambda} implies that
$B(\lambda^*-\varepsilon, r^*)$ has a point ${\bar x\choose\bar
x_{n+1}}\in S\times \Z_+$ in its interior. Thus $a_i(\bar
x-f)+(\lambda^*-\varepsilon-a_ir^*)\bar x_{n+1}<1$, $i\in I$.

We show that ${\bar x\choose\bar x_{n+1}}$ is also in
$B(\lambda^*,r^*)$. Suppose not. Then, by our choice of $\delta$, there
exists $h\in I$ such that $a_h (\bar x-f)+(\lambda^*-a_hr^*)\bar
x_{n+1}\geq 1+\delta$.

It follows from the  definition of $B(\lambda,r^*)$ given in~\eqref{eq:B-lambda} that $B(\lambda^*-\varepsilon, r^*)\cap
(\R^n\times\R_+)\subseteq B(\lambda^*-\bar\varepsilon, r^*)\cap
(\R^n\times\R_+)$ because $\varepsilon \leq \bar\varepsilon$; therefore $\bar x_{n+1}\leq M$. Hence
$$1+\delta\leq a_h (\bar x-f)+(\lambda^*-a_hr^*)\bar x_{n+1}\leq a_h (\bar x-f)+(\lambda^*-\varepsilon-a_hr^*)\bar x_{n+1}+\varepsilon M<1+\varepsilon M\leq 1+\delta,$$
a contradiction.

Hence ${\bar x\choose\bar x_{n+1}}$ is in $B(\lambda^*,r^*)$. Since
$B$ is $S$-free and $B(\lambda^*-\varepsilon, r^*)\cap(\R^n\times\{0\})=B\times\{0\}$,
it follows that $B(\lambda^*-\varepsilon, r^*)$ does not contain any point of $S\times \{0\}$ in its
interior. Thus  $\bar x_{n+1}>0$.
\end{proof}

\section{Proof of Theorem~\ref{thm:main}}
\label{sec:Thm-lifting-region}

The proof of Theorem~\ref{thm:main} will follow easily from the next proposition.
 \begin{prop}\label{prop:equivalence} Let $\psi$ be a  minimal valid function for~\eqref{mod:cont}, and let $r^*\in\R^n$. The following are equivalent.
\begin{itemize}
\item[i)] $\pi^*(r^*) = \psi(r^*)$.
\item[ii)] There exists $x\in S \cap B_\psi$ such that $\psi(r^*)+\psi(x-f-r^*)=\psi(x-f)=1$.
\end{itemize}
\end{prop}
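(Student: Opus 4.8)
The proof is to connect the two characterizations through the ``blocking point'' result in Theorem~\ref{lemma:block-point}. Recall that $\pi^*(r^*)$ is the smallest $\lambda$ for which $B(\lambda,r^*)$ is $S\times\Z_+$-free, and that by Theorem~\ref{lemma:block-point} this minimum is attained precisely when $B(\pi^*(r^*),r^*)$ contains a point ${\bar x\choose \bar x_{n+1}}\in S\times\Z_+$ with $\bar x_{n+1}>0$. Since $\psi\le 1$ on $B_\psi-f$ (indeed $\psi(x-f)\le 1$ for $x\in B_\psi$, with equality on the boundary) and $\psi$ is subadditive, the natural strategy is: translate the ``$\pi^*(r^*)=\psi(r^*)$'' condition into a statement about which $\lambda$ makes $B(\lambda,r^*)$ $S\times\Z_+$-free, and then read off the blocking point as the $x\in S\cap B_\psi$ promised in (ii).

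\textbf{Direction (i)$\Rightarrow$(ii).} Assume $\pi^*(r^*)=\psi(r^*)=:\lambda^*$. By Theorem~\ref{lemma:block-point}, $B(\lambda^*,r^*)$ contains a point ${\bar x\choose \bar x_{n+1}}\in S\times\Z_+$ with $\bar x_{n+1}>0$; in fact, because $B(\lambda^*,r^*)$ is maximal $S\times\Z_+$-free (Lemma~\ref{lemma:B-lambda-max}), I expect to be able to take $\bar x_{n+1}=1$ by moving along the recession direction ${r^*\choose 1}$ or by peeling off integer multiples — this is a point to check carefully. With $\bar x_{n+1}=1$, membership in $B(\lambda^*,r^*)$ reads $a_i(\bar x-f)+(\lambda^*-a_ir^*)\le 1$ for all $i\in I$, i.e.\ $a_i(\bar x-f-r^*)\le 1-\lambda^*$ for all $i$, hence $\psi(\bar x-f-r^*)\le 1-\lambda^* = 1-\psi(r^*)$. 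Subadditivity gives $\psi(\bar x - f)\le \psi(r^*)+\psi(\bar x-f-r^*)\le 1$, so $\bar x\in B_\psi$, and combined with $\bar x\in S$ we get $\bar x\in S\cap B_\psi$. It remains to upgrade the inequalities to equalities: $\psi(\bar x-f)\le 1$ forces $\bar x$ on the boundary of $B_\psi$ — here I would invoke that $B_\psi$ is $S$-free so $\bar x\notin\intr(B_\psi)$, giving $\psi(\bar x-f)=1$ — and then the subadditivity chain $1=\psi(\bar x-f)\le \psi(r^*)+\psi(\bar x-f-r^*)\le \psi(r^*)+(1-\psi(r^*))=1$ must hold with equality throughout, yielding $\psi(r^*)+\psi(\bar x-f-r^*)=\psi(\bar x-f)=1$, which is (ii).

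\textbf{Direction (ii)$\Rightarrow$(i).} Suppose there is $x\in S\cap B_\psi$ with $\psi(r^*)+\psi(x-f-r^*)=\psi(x-f)=1$. Set $\lambda^*=\psi(r^*)$ and consider the point ${x\choose 1}$. For each $i\in I$, $a_i(x-f)+(\lambda^*-a_ir^*)\cdot 1 = a_i(x-f-r^*)+\psi(r^*)\le \psi(x-f-r^*)+\psi(r^*)=1$, so ${x\choose 1}\in B(\lambda^*,r^*)$, and since $x\in S$, $1\in\Z_+$, this is a point of $S\times\Z_+$ with positive last coordinate lying in $B(\lambda^*,r^*)$. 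To apply Theorem~\ref{lemma:block-point} and conclude $\lambda^*=\pi^*(r^*)$, I still need $B(\lambda^*,r^*)$ to be $S\times\Z_+$-free; equivalently, by Theorem~\ref{thm:B-lambda}, that $\sum_r\psi(r)s_r+\psi(r^*)y_{r^*}\ge 1$ is valid for~\eqref{eq:oneray}. But this is immediate: $\pi=\psi$ restricted appropriately is a (trivial) lifting — more precisely $\psi$ itself is a valid function for the relaxation where $y_{r^*}$ is treated as continuous, so a fortiori $\sum_r\psi(r)s_r+\psi(r^*)y_{r^*}\ge 1$ holds on~\eqref{eq:oneray} — hence $\pi^*(r^*)\le \psi(r^*)=\lambda^*$. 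Combined with the just-exhibited point of $S\times\Z_+$ in $B(\lambda^*,r^*)$ with positive last coordinate and $S\times\Z_+$-freeness of $B(\lambda^*,r^*)$ (which follows since $\lambda^*\ge\pi^*(r^*)$ and $\lambda^*\le\pi^*(r^*)$ give $\lambda^*=\pi^*(r^*)$, then Theorem~\ref{thm:B-lambda}), Theorem~\ref{lemma:block-point} gives $\pi^*(r^*)=\lambda^*=\psi(r^*)$.

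\textbf{Main obstacle.} The delicate point is the reduction to $\bar x_{n+1}=1$ in the (i)$\Rightarrow$(ii) direction: Theorem~\ref{lemma:block-point} only supplies $\bar x_{n+1}>0$, and I must argue that one can always find such a blocking point with last coordinate exactly $1$ (e.g.\ by using that ${r^*\choose 1}$ is in the recession cone of the minimal face of $B(\lambda^*,r^*)$ when $\lambda^*\ne 0$, or handling $\lambda^*=0$ separately via the lineality argument sketched after \eqref{eq:B-lambda}). Once that normalization is in hand, everything else is a straightforward bookkeeping exercise with the subadditivity of $\psi$ and the defining inequalities of $B_\psi$ and $B(\lambda,r^*)$.
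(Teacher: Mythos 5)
Your direction (ii)$\Rightarrow$(i) is essentially sound, though more roundabout than necessary: the validity of $\sum_r\psi(r)s_r+\psi(r^*)y_{r^*}\ge 1$ for~\eqref{eq:oneray} that you establish already gives, via Theorem~\ref{thm:B-lambda}, the $S\times\Z_+$-freeness of $B(\psi(r^*),r^*)$, and together with the point ${x\choose 1}$ Theorem~\ref{lemma:block-point} finishes it; the parenthetical where you invoke ``$\lambda^*\le\pi^*(r^*)$'' is circular as written (that is the conclusion), but it is also superfluous. The paper does this direction even more directly, without Theorem~\ref{lemma:block-point}: plug the feasible solution of~\eqref{eq:oneray} given by $y_{r^*}=1$, $s_{x-f-r^*}=1$ into the inequality defining $\pi^*$ and sandwich $1\le\pi^*(r^*)+\psi(x-f-r^*)\le\psi(r^*)+\psi(x-f-r^*)=1$.

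The real problem is the gap you yourself flag in (i)$\Rightarrow$(ii): the reduction of the blocking point to last coordinate $1$. Neither of your proposed repairs works. Moving along $\pm{r^*\choose 1}$ (or along the lineality direction when $\lambda^*=0$) changes the first coordinate to $\bar x-(\bar x_{n+1}-1)r^*$, which for arbitrary $r^*\in\R^n$ is in general not integral, let alone in $S$, so the resulting point cannot serve as the $x\in S\cap B_\psi$ required in (ii); and ``peeling off integer multiples'' is not an argument. The missing idea — and it is exactly where hypothesis (i) enters — is the paper's observation that since $\pi^*(r^*)=\psi(r^*)=\max_{i\in I}a_ir^*$, every coefficient $\pi^*(r^*)-a_ir^*$ of $x_{n+1}$ in the inequalities~\eqref{eq:B-lambda} defining $B(\pi^*(r^*),r^*)$ is nonnegative; hence, keeping the \emph{same} $\bar x$ and lowering the last coordinate from $\bar x_{n+1}\ge 1$ to $1$ can only decrease each left-hand side, so ${\bar x\choose 1}\in B(\pi^*(r^*),r^*)$ with $\bar x\in S$ unchanged. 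Note that no hypothesis-free normalization of blocking points to height $1$ exists: for $\lambda<\psi(r^*)$ some coefficient $\lambda-a_ir^*$ is negative and lowering $x_{n+1}$ can leave the set, so you cannot hope to prove this as a general fact about $B(\lambda,r^*)$ and must use (i). Once this step is supplied, your concluding bookkeeping ($\psi(\bar x-f-r^*)\le 1-\psi(r^*)$, subadditivity, and $\psi(\bar x-f)\ge 1$ from $S$-freeness of $B_\psi$) coincides with the paper's.
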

\begin{proof}
We first prove ii) implies i). The vector $(s,y_{r^*})$ defined by $y_{r^*}=1$, $s_{x-f-r^*}=1$, $s_r=0$ for all $r \not=x-f-r^*$, is a solution of \eqref{eq:oneray}. Since $\sum_{r\in \R^n}\psi(r)s_r+\pi^*(r^*)y_{r^*}\geq 1$ is valid for \eqref{eq:oneray}, it follows that
$$1\leq \pi^*(r^*)+\psi(x-f-r^*)\leq \psi(r^*)+\psi(x-f-r^*)=1,$$
thus $\pi^*(r^*) = \psi(r^*)$.

\bigskip
We now prove i) implies ii). By Theorem~\ref{lemma:block-point},
$B(\pi^*(r^*),r^*)$ contains a point ${x\choose x_{n+1}}\in S\times \Z_+$
such that $ x_{n+1}>0$. Since $\pi^*(r^*) = \psi(r^*)=\max_{i\in I}a_ir^*$,
 the coefficients of $x_{n+1}$ in the inequalities \eqref{eq:B-lambda} defining $B(\pi(r^*),r^*)$ are all nonnegative.
 This shows that  $B(\pi^*(r^*),r^*)$ contains the point ${x\choose 1}$ and therefore
$a_i (x-f) + \psi (r^*) - a_i r^* \leq 1$ for all $i \in I$. This implies that
$\max_{i \in I} a_i (x-f-r^*) + \psi (r^*) \leq 1$. Hence we have that $\psi(x-f-r^*)+\psi(r^*) \leq 1$. Conversely, we also have $1 \leq \psi (x-f) \leq \psi(x-f-r^*)+\psi(r^*)$, where the first inequality follows from the fact that  $x \in S$ and that, by Theorem~\ref{Th:minimal2}, $B_\psi$ is $S$-free, while the second inequality follows from the fact that $\psi$ is subadditive.
We conclude that $\psi(r^*)+\psi(x-f-r^*)=\psi(x-f)=1$.
\end{proof}

\begin{proof}[Proof of Theorem~\ref{thm:main}]
By Lemma~\ref{rmk:psi},  $R_\psi=\{r\in \R^n: \pi^*(r)=\psi(r)\}$. By Proposition~\ref{prop:equivalence}, given $r\in\R^n$,  $\pi^*(r)=\psi(r)$ if and only if $r\in R(x)$ for some $x\in S\cap B_\psi$. Thus $R_\psi=\cup_{x\in S\cap B_\psi}R(x)$.
\end{proof}
\section{Proof of Theorem~\ref{thm:main2}}
\label{sec:unique-lifting}

Let $\psi$ be a minimal valid function for \eqref{mod:cont}
and let $B := B_\psi=\{x\in\R^n\st a_i(x-f)\leq 1,\, i\in I\}$. In this section we assume $S=\Z^n$. Under this assumption, Theorem~\ref{Th:minimal1} shows that:
\begin{itemize}
\item  B is a maximal lattice-free convex set.
\item  $\rec(B)=\lin(B)=\{r\in \R^n\st a_ir=0 \mbox{ for all } i\in I \}$ and $\rec(B)$ is a rational subspace.
\item $\psi(r)=\max_{i\in I}a_ir\ge 0$ for every $r\in \R^n$.
\end{itemize}
\iffalse
By Theorem~\ref{Th:minimal1}, $B$ is a maximal lattice-free convex set containing $f$ in its interior.
Following \eqref{eq:unique-rep-B}, it can be uniquely written
as $B_\psi=\{x\in\R^n\st a_i(x-f)\leq 1,\, i\in I\}$.
Since $S=\Z^n$, by Theorem~\ref{Th:minimal1}, $\rec(B)=\lin(B)$. This implies that $\psi(r) = \max_{i \in I} a_ir \geq 0$ for all $r \in \R^n$.
\fi
Let $L_\psi:=\{r\in \R^n\st a_ir=a_jr \mbox{ for all } i,j\in I \}$.
To prove Theorem~\ref{thm:main2}, we need the following three lemmas.

\begin{lemma} \label{lemma:closed-set} $L_\psi=\lin(B)$. Furthermore, $R_\psi +\Z^n$ is a closed set.
\end{lemma}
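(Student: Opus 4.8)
The plan is to split the statement into its two assertions and handle them in order. For the first assertion, $L_\psi = \lin(B)$: the inclusion $\lin(B) \subseteq L_\psi$ is immediate, since $\lin(B) = \{r : a_i r = 0 \ \forall i \in I\}$ and any $r$ with all $a_i r = 0$ trivially has $a_i r = a_j r$ for all $i,j$. For the reverse inclusion, I would argue as follows. Take $r \in L_\psi$, so that $a_i r = a_j r =: \alpha$ for all $i,j \in I$. Then also $a_i(-r) = -\alpha$ for all $i$. If $\alpha \neq 0$, then one of $r, -r$ — say $r$ after possibly replacing $r$ by $-r$ — satisfies $a_i r = \alpha < 0$ for all $i \in I$. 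But then $f + t r \in B$ for every $t \ge 0$ (since $a_i(f + tr - f) = t\alpha \le 0 \le 1$), so $r \in \rec(B)$. Under the assumption $S = \Z^n$ we have $\rec(B) = \lin(B)$ (recorded in the bulleted list above, from Theorem~\ref{Th:minimal1}), hence $-r \in \rec(B)$ as well, forcing $-\alpha \le 0$, i.e. $\alpha \ge 0$, contradicting $\alpha < 0$. Therefore $\alpha = 0$, which gives $r \in \lin(B)$. This proves $L_\psi = \lin(B)$.

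For the second assertion, that $R_\psi + \Z^n$ is closed, the key point is that $R_\psi$ is a finite union of polyhedra (Proposition~\ref{prop:region}(iv)), and each of these polyhedra $R(x)$ has recession cone equal to $L_\psi = \lin(B)$, which is a \emph{rational} subspace (again from the bulleted list). So the plan is: write $R_\psi = \bigcup_{k=1}^{N} R(x_k)$ with each $R(x_k)$ a polyhedron whose recession cone is the rational subspace $L_\psi$. Then $R_\psi + \Z^n = \bigcup_{k=1}^N (R(x_k) + \Z^n)$, a finite union, so it suffices to show each $R(x_k) + \Z^n$ is closed. Fix one such polyhedron $R = R(x_k)$. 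Since $\lin(B) = L_\psi$ is a rational subspace and $\rec(R) = \lin(R) = L_\psi$, I would decompose $\R^n = L_\psi \oplus L_\psi^\perp$, project $\Z^n$ onto $L_\psi^\perp$ to obtain a lattice $\Lambda$ (rationality of $L_\psi$ is exactly what makes the projection a lattice, as used in the proof of Proposition~\ref{prop:region}(iv)), and project $R$ onto $L_\psi^\perp$ to obtain a \emph{polytope} $P$ (because $\rec(R) = L_\psi$ kills the whole recession cone). Then $R + \Z^n = L_\psi + (P + \Lambda)$ up to the identification, and $P + \Lambda$ is closed: it is a locally finite union of translates of the compact set $P$ by the discrete set $\Lambda$ (any bounded region meets only finitely many translates $P + v$, $v \in \Lambda$, since $\Lambda$ is discrete and $P$ is bounded), hence closed; adding back the linear subspace $L_\psi$ preserves closedness since $L_\psi$ is closed and $R + \Z^n$ is a union of cosets of $L_\psi$. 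Therefore each $R(x_k) + \Z^n$ is closed, and so is their finite union $R_\psi + \Z^n$.

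I expect the main obstacle to be the closedness argument: one must be careful that an \emph{infinite} Minkowski sum with a lattice can fail to be closed in general, so the argument genuinely needs the recession-cone computation (Proposition~\ref{prop:region}(ii)) together with the rationality of $L_\psi$ — these are what reduce the situation to a sum of a polytope with a lattice plus a rational subspace, where local finiteness gives closedness. The first assertion $L_\psi = \lin(B)$ is routine but is what licenses using "$\rec(R) = L_\psi$ is a rational subspace" in the second part, so it is logically prior and should be proved first.
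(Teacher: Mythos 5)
Your proof is correct and takes essentially the same route as the paper: the identity $L_\psi=\lin(B)$ via $\rec(B)=\lin(B)$ (valid since $S=\Z^n$), and closedness of $R_\psi+\Z^n$ by writing each $R(x)+\Z^n$ as (projection of $R(x)$ onto $L_\psi^\perp$, a polytope) plus (projection of $\Z^n$, a lattice) plus $L_\psi$. The only cosmetic differences are that you establish closedness of the polytope-plus-lattice sum by local finiteness rather than citing the compact-plus-closed fact, and you justify adding back $L_\psi$ via the coset/projection-preimage observation instead of the paper's direct assertion.
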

\begin{proof}
Let $L :=\lin (B)$. It follows from Theorem~\ref{Th:minimal1} that $L$ is a rational linear subspace and $\rec(B)=L$. Given $r\in L_\psi$, it follows that $a_i r=a_j r$ for all $i,j\in I$, which implies that $a_ir= \alpha$ for all $i\in I$ for some constant $\alpha$ that only depends on $r$. If $\alpha\leq 0$ then $r\in \rec(B)=L$, while if $\alpha\geq 0$ then $-r\in \rec(B)=L$, thus $r\in L$. The converse inclusion $L \subseteq L_\psi$ is trivial.

Next we show that $R_\psi +\Z^n$ is a closed set. It follows from Proposition~\ref{prop:region} that $R_\psi$ is the union of a finite number of polyhedra, $R_1,\ldots,R_k$, and that $\rec(R_i)=L$ for $i=1,\ldots,k$. Let $\tilde R_i$ be the orthogonal projection of $R_i$ onto $L^\bot$. It follows that $\tilde R_i$ is a polytope, so in particular $\tilde R_i$ is compact. Furthermore, $R_i=\tilde R_i+L$. Let $\Lambda$ be the orthogonal projection of $\Z^n$ onto $L^\bot$. Since $L$ is rational, $\Lambda$ is a lattice, so in particular it is closed. Note that $R_i+\Z^n=(\tilde R_i+\Lambda)+L$. Since the Minkowski sum of a compact set and a closed set is closed (see e.g.~\cite{inf-dim-an}  Lemma 5.3 (4)), it follows that $\tilde R_i+\Lambda$ is closed. Since the Minkowski sum of a closed set and a linear subspace is closed, it follows that $(\tilde R_i+\Lambda)+L$ is closed.
\end{proof}

\begin{lemma}\label{claim:boundary}
Let $H:=\R^n\sm (R_\psi + \Z^n)$. If $H\neq \emptyset$, then there exists $\bar r \in R_\psi$ such that $\bar r$ belongs to the closure of $H$.
Any such vector satisfies  $\psi(\bar r)>0$.
\end{lemma}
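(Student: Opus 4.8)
The plan is to prove the two assertions of Lemma~\ref{claim:boundary} in turn: first that $H\neq\emptyset$ forces the existence of a point $\bar r\in R_\psi$ lying in the closure $\cl(H)$, and second that any such $\bar r$ must satisfy $\psi(\bar r)>0$.

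\textbf{Existence of $\bar r$.} By Lemma~\ref{lemma:closed-set}, $R_\psi+\Z^n$ is closed, hence $H=\R^n\sm(R_\psi+\Z^n)$ is open and nonempty. Since $R_\psi$ contains the origin in its interior (proved in~\cite{ccz}, and recalled in the proof of Proposition~\ref{prop:region}$v)$), the set $R_\psi+\Z^n$ is not all of $\R^n$ but it does contain an open neighborhood of $\Z^n$; in particular both $R_\psi+\Z^n$ and $H$ are nonempty. Now I would argue that $\cl(H)$ must meet $R_\psi$. Pick any point $h\in H$ and any point $p$ in the interior of $R_\psi$ (e.g. $p=0$), and consider the segment $[p,h]$. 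The point $p$ lies in $R_\psi+\Z^n$ and $h$ does not, so walking along the segment from $p$ to $h$ there is a last parameter $t^*$ at which the point $q:=p+t^*(h-p)$ still lies in the closed set $R_\psi+\Z^n$; for $t>t^*$ the point is in $H$, so $q\in\cl(H)$. Write $q=r+z$ with $r\in R_\psi$, $z\in\Z^n$. Then $\bar r:=q-z=r$ lies in $R_\psi$, and $\bar r$ lies in $\cl(H-z)=\cl(H)$ because $H$ is invariant under translation by $\Z^n$ (since $R_\psi+\Z^n$ is). This produces the desired $\bar r\in R_\psi\cap\cl(H)$.

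\textbf{Positivity of $\psi(\bar r)$.} Suppose for contradiction that $\psi(\bar r)\le 0$. Recall from the structural remarks opening Section~\ref{sec:unique-lifting} that, since $S=\Z^n$, we have $\psi(r)=\max_{i\in I}a_ir\ge 0$ for all $r$, so $\psi(\bar r)=0$, which means $a_i\bar r\le 0$ for all $i\in I$, i.e. $\bar r\in\rec(B)=\lin(B)=L_\psi$ (using Lemma~\ref{lemma:closed-set}). But by Proposition~\ref{prop:region}$v)$, $L_\psi$ is contained in the \emph{interior} of $R_\psi$. Since $R_\psi\subseteq R_\psi+\Z^n$, a point in the interior of $R_\psi$ has a whole open ball around it contained in $R_\psi+\Z^n$, hence is not in $\cl(H)$ — contradicting $\bar r\in\cl(H)$. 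Therefore $\psi(\bar r)>0$.

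\textbf{Anticipated difficulty.} The routine part is the positivity argument, which is just a short chain through Proposition~\ref{prop:region}$v)$ and the nonnegativity of $\psi$. The part that needs care is the first assertion: one must make sure the "last point of $R_\psi+\Z^n$ on a segment leaving it" is genuinely a boundary point of $H$ and that translating it back by a lattice vector keeps it in both $R_\psi$ and $\cl(H)$; this relies essentially on the closedness of $R_\psi+\Z^n$ (Lemma~\ref{lemma:closed-set}) and on the $\Z^n$-periodicity of the set $R_\psi+\Z^n$, so I would be explicit about invoking both. No delicate estimates are needed beyond that.
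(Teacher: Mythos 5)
Your proof is correct and follows essentially the same route as the paper: use the closedness of $R_\psi+\Z^n$ (Lemma~\ref{lemma:closed-set}) to produce a point of $R_\psi+\Z^n$ in the closure of $H$, translate it back into $R_\psi$ by a lattice vector using the $\Z^n$-invariance of $H$, and then deduce $\psi(\bar r)>0$ from Proposition~\ref{prop:region}~$v)$ together with $L_\psi=\lin(B)$ and $\psi\ge 0$. Your explicit last-exit argument along the segment $[p,h]$ is just a concrete way of obtaining the boundary point the paper gets directly from closedness, so the two proofs coincide in substance.
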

\begin{proof} By Lemma~\ref{lemma:closed-set}, $R_\psi + \Z^n$ is closed. It follows that, if $R_\psi + \Z^n\neq \R^n$, then there exists a point $\tilde r\in R_\psi + \Z^n$ such that  $\tilde r$ belongs to the closure of $H$. Since $\tilde r\in R_\psi + \Z^n$, there exists $\bar r\in R_\psi$ and $w\in\Z^n$ such that $\tilde r=\bar r+w$. We show that the point $\bar r$ belongs to the closure of $H$. If not, then there exists an open ball $C$ centered at $\bar r$ contained in the interior of $R_\psi$, thus $C+w$ is an open ball centered at $\tilde r$ contained in the interior of $R_\psi+\Z^n$, contradicting the fact that $\tilde r$ is in the closure of $H$.

Finally, given $\bar r$ on the boundary of $R_\psi$, Proposition~\ref{prop:region} v) and the fact that $L_\psi = \lin(B)$ imply that $\bar r\notin \lin(B)$, thus $\psi(\bar r)>0$.
\end{proof}

\begin{lemma}\label{lemma:bounded-rec}
Let $r^*\in\R^n$. If $\pi^*(r^*)>0$, then $\rec(B(\pi^*(r^*), r^*)\cap(\R^n\times \R_+))=\rec(B)\times\{0\}$.
\end{lemma}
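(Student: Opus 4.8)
The plan is to analyze the structure of $B(\lambda, r^*)$ and show that decreasing $\lambda$ from $\pi^*(r^*)$ cannot create new recession directions with positive last coordinate, hence the recession cone at $\lambda = \pi^*(r^*)$ already has all its vectors in $\R^n \times \{0\}$. First I would recall from \eqref{eq:B-lambda} that $B(\lambda, r^*) = \{\binom{x}{x_{n+1}} : a_i(x-f) + (\lambda - a_i r^*)x_{n+1} \le 1,\ i \in I\}$, so a vector $\binom{r}{r_{n+1}}$ lies in $\rec(B(\lambda, r^*))$ if and only if $a_i r + (\lambda - a_i r^*)r_{n+1} \le 0$ for all $i \in I$. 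Restricting to $\R^n \times \R_+$, a recession direction $\binom{r}{r_{n+1}}$ with $r_{n+1} \ge 0$ must in particular (taking $r_{n+1} = 0$, or rather looking at the $x_{n+1}=0$ slice) contain $\rec(B) \times \{0\}$; the claim is that there is nothing more. So suppose for contradiction that $\binom{r}{r_{n+1}} \in \rec(B(\pi^*(r^*), r^*))$ with $r_{n+1} > 0$; by scaling we may take $r_{n+1} = 1$.

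The key step is to exploit the fact that $\pi^*(r^*) > 0$ together with Theorem~\ref{lemma:block-point}, which guarantees a point $\binom{\bar x}{\bar x_{n+1}} \in S \times \Z_+$ on the boundary of $B(\pi^*(r^*), r^*)$ with $\bar x_{n+1} > 0$. The presence of the recession ray $\binom{r}{1}$ with the blocking point allows me to translate: consider the points $\binom{\bar x}{\bar x_{n+1}} + t\binom{r}{1}$ for $t \ge 0$, which all lie in $B(\pi^*(r^*), r^*)$. More usefully, I would look instead at what the ray $\binom{r}{1}$ forces as $\lambda$ decreases. Since $\binom{r}{1} \in \rec(B(\pi^*(r^*), r^*))$ means $a_i r + (\pi^*(r^*) - a_i r^*) \le 0$ for all $i \in I$, and since decreasing $\lambda$ below $\pi^*(r^*)$ only decreases the left-hand side (as the coefficient of $r_{n+1}=1$ is $\lambda - a_i r^*$), the ray $\binom{r}{1}$ remains a recession direction of $B(\lambda, r^*)$ for all $\lambda \le \pi^*(r^*)$. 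Then, using that $B(\pi^*(r^*), r^*)$ already contains a lattice point with positive last coordinate on its boundary and the ray $\binom{r}{1}$, I can produce, for $\lambda$ slightly below $\pi^*(r^*)$, a point of $S \times \Z_+$ in the interior that has arbitrarily large last coordinate — but this should be reconciled with the blocking-point geometry. Actually the cleaner route: I claim $\binom{r}{1} \in \rec(B(\pi^*(r^*),r^*))$ implies $a_i r \le a_i r^* - \pi^*(r^*)$ for all $i$; taking the max over $i \in I$ gives $\psi(r) \le \psi_{\text{something}}$, and combined with $\pi^*(r^*) > 0$ and $\pi^* \le \psi$, one derives a contradiction with the characterization that $B$ is lattice-free (the slice argument forces $r \in \rec(B) = \lin(B)$, so $a_i r = 0$ for all $i$, giving $\pi^*(r^*) \le a_i r^*$ for all $i$, i.e. $\pi^*(r^*) \le \min_i a_i r^*$, while also $\pi^*(r^*) \le \psi(r^*) = \max_i a_i r^*$; this alone is not yet a contradiction, so the blocking point must be used).

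Let me restate the decisive step: with $r_{n+1} = 1$ a recession direction, pick the blocking point $\binom{\bar x}{\bar x_{n+1}}$ from Theorem~\ref{lemma:block-point} and consider $\binom{\bar x}{\bar x_{n+1}} - \binom{\bar r}{\bar r_{n+1}}$ where $\binom{\bar r}{\bar r_{n+1}}$ is an \emph{integer} recession direction of $B(\pi^*(r^*),r^*) \cap \conv(S \times \Z_+)$ with $\bar r_{n+1} > 0$ (its existence is exactly the negation of the conclusion, after using that this recession cone is rational by Lemma~\ref{lemma:B-lambda-max} and Theorem~\ref{Th:minimal1}, as in Case 1 of the proof of Theorem~\ref{lemma:block-point}). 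Moving \emph{backwards} along this ray from the blocking point keeps us in $\conv(S \times \Z_+)$ as long as the last coordinate stays nonnegative, and after finitely many steps we reach a point with last coordinate $0$, which by the $x_{n+1}=0$ slice lies in $B \times \{0\}$ with $B$ lattice-free — forcing all intermediate integer points to eventually be outside $B(\pi^*(r^*), r^*)$, contradicting that it is a recession direction (the whole backward ray from an interior-adjacent point should stay in the set). The main obstacle, and the place requiring genuine care, is handling the boundary case cleanly: $B(\pi^*(r^*), r^*)$ is lattice-free but not strictly so, so I cannot simply say the blocking point is interior; I expect to need the uniform-distance estimate of Remark~\ref{rmk:exisits-delta} applied to the maximal $S \times \Z_+$-free set $B(\pi^*(r^*), r^*)$ (via Lemma~\ref{lemma:B-lambda-max}), exactly paralleling Case 2 of the proof of Theorem~\ref{lemma:block-point}, to push the argument through and rule out any recession direction escaping into $x_{n+1} > 0$.
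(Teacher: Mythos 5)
There is a genuine gap: your ``decisive step'' never produces a valid contradiction. Moving \emph{backwards} along a recession direction of $B(\pi^*(r^*),r^*)\cap\conv(S\times\Z_+)$ from the blocking point is not legitimate --- a recession direction only guarantees that \emph{forward} translates stay in the set, so the backward ray may exit it, and exiting contradicts nothing. Likewise, reaching integer points in the slice $x_{n+1}=0$ does not clash with lattice-freeness, since $B$ is allowed to (and does) contain lattice points on its boundary; so the claim that the intermediate integer points being ``eventually outside $B(\pi^*(r^*),r^*)$'' contradicts the recession property is unfounded. Your earlier attempted shortcut is also off target: from $\binom{r}{1}\in\rec(B(\pi^*(r^*),r^*))$ nothing forces $r\in\rec(B)$ (the inequality you correctly wrote is $a_ir\le a_ir^*-\pi^*(r^*)$, not $a_ir\le 0$), and you then conclude, rightly for that dead end, that ``this alone is not yet a contradiction'' and defer everything to the flawed blocking-point argument, which is further hedged with an unresolved appeal to Remark~\ref{rmk:exisits-delta}. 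Theorem~\ref{lemma:block-point} is in fact not needed here at all.

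The missing idea is to apply $\rec(B)=\lin(B)$ (which holds because $S=\Z^n$ and $B$ is maximal lattice-free, by Theorem~\ref{Th:minimal1}) to the vector $r-r^*$ rather than to $r$. The paper argues as follows: since $B(\pi^*(r^*),r^*)$ is $\Z^n\times\Z_+$-free, Lemma~\ref{lemma:B-lambda-max} makes it a \emph{maximal} $\Z^n\times\Z_+$-free set, so by Theorem~\ref{Th:minimal1} the hypothetical recession direction $\binom{r}{1}$ of $B(\pi^*(r^*),r^*)\cap(\R^n\times\R_+)$ lies in the lineality space, giving $a_ir+(\pi^*(r^*)-a_ir^*)=0$ for all $i\in I$; then $\hat r:=\frac{r^*-r}{\pi^*(r^*)}$ satisfies $a_i\hat r=1$ for all $i$, so $-\hat r\in\rec(B)$ while $\hat r\notin\rec(B)$, contradicting $\rec(B)=\lin(B)$. (In fact even your inequality $a_i(r-r^*)\le-\pi^*(r^*)<0$ suffices: it puts $r-r^*$ in $\rec(B)\setminus\lin(B)$, the same contradiction; this is exactly where the hypothesis $\pi^*(r^*)>0$ is used.) Your proposal contains the raw inequality but never combines it with $\rec(B)=\lin(B)$ in the right way, so as written it does not prove the lemma.
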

\begin{proof}
Let $\lambda^*=\pi^*(r^*)>0$.  Let ${r\choose r_{n+1}}\in \R^n\times \R_+$ such that ${r\choose r_{n+1}}\in \rec(B(\lambda^*,r^*))$. If $r_{n+1}=0$, it follows that $r\in \rec(B)$, thus ${r\choose r_{n+1}}\in \rec(B)\times\{0\}$. So suppose by contradiction that $r_{n+1}>0$. We may assume without loss of generality that $r_{n+1}=1$. Since $B(\lambda^*,r^*)$ is a maximal $\Z^n\times \Z_+$-free convex set, it follows from Theorem~\ref{Th:minimal1} that $a_i r+(\lambda^*-a_i r^*)=0$ for all $i\in I$. Since $\lambda^*>0$, the point $\hat r:=\frac{r^*-r}{\lambda^*}$ satisfies $a_i\hat r=1$ for all $i\in I$. It follows that $\hat r\notin\rec(B)$ while $-\hat r\in\rec(B)$, contradicting the fact that $\rec(B)=\lin(B)$.
\end{proof}

\begin{proof}[Proof of Theorem~\ref{thm:main2}]

As already mentioned in the introduction, it is shown in~\cite{ccz} that if $R_\psi+\Z^n=\R^n$ then $\pi^*$ is the unique minimal valid lifting of $\psi$.\medskip

We show the converse, that is, we show that if $R_\psi +\Z^n$ {\em does not} cover all of $\R^n$, then there exist at least two distinct minimal liftings of $\psi$.

We first observe that there exist two distinct minimal liftings of $\psi$ if and only if $\pi^*$ is not a  lifting for $\psi$. Indeed, if $\pi^*$ is a lifting for $\psi$, then $\pi^*$ is the unique minimal lifting for $\psi$, since any other lifting $\pi$ satisfies $\pi^*\leq \pi$. Conversely, suppose that  $\pi^*$ is not a lifting for $\psi$. Given any minimal lifting $\pi$ of $\psi$, since $\pi^*$ is not a valid lifting it follows that there exists $r'\in\R^n$ such that $\pi^*(r')<\pi(r')$. By Lemma~\ref{rmk:psi}, there exists a minimal lifting $\pi'$ such that $\pi'(r')=\pi^*(r')$. Thus $\pi$ and $\pi'$ are distinct minimal liftings of $\psi$.
\bigskip

Thus, we only need to show that, if $R_\psi +\Z^n\neq \R^n$, then $\pi^*$ is not a lifting. Suppose by contradiction that $R_\psi +\Z^n\neq \R^n$ but $\pi^*$ is a lifting of $\psi$. It follows that $\pi^*$ is a minimal lifting for $\psi$, thus by Proposition~\ref{prop:pi_continuous}, $\pi^*$ is a continuous function.

Let $H = \R^n \setminus (R_\psi + \Z^n)$. We will show the following.

\begin{claim}\label{claim:block-point}  There exists $p\in H$ such that $B(\pi^*(p),p)$ contains a point ${\bar x\choose 1}$ with $\bar x\in\Z^n$.
\end{claim}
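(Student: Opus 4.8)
The plan is to produce the point $p$ as a carefully chosen boundary point of $R_\psi$ and then analyze the blocking structure of $B(\pi^*(p),p)$ guaranteed by Theorem~\ref{lemma:block-point}. First I would invoke Lemma~\ref{claim:boundary}: since $H=\R^n\setminus(R_\psi+\Z^n)$ is assumed nonempty, there exists $\bar r\in R_\psi$ lying in the closure of $H$, and any such $\bar r$ satisfies $\psi(\bar r)>0$. The candidate point $p$ will be built from $\bar r$ (possibly after a translation by an integer vector and a small perturbation, using the continuity of $\pi^*$ that we have from Proposition~\ref{prop:pi_continuous} under the contradiction hypothesis). The reason to push $p$ to the boundary of $R_\psi+\Z^n$ is that on one side we are inside $R_\psi$ (so $\pi^*$ agrees with $\psi$ there, by Theorem~\ref{thm:main} / Lemma~\ref{rmk:psi}), and on the other side we are in $H$ (so $\pi^*<\psi$); the blocking point of Theorem~\ref{lemma:block-point} should then have last coordinate exactly $1$ rather than larger, because a larger value would "cover" a whole integer translate of a neighborhood and contradict $p$ being arbitrarily close to $H$.

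Concretely, the key steps in order are: (1) Take $\bar r\in R_\psi$ in the closure of $H$ with $\psi(\bar r)>0$, as above. (2) Apply Theorem~\ref{lemma:block-point} to $r^*=\bar r$: $B(\pi^*(\bar r),\bar r)$ is $\Z^n\times\Z_+$-free and contains a point ${\bar x\choose\bar x_{n+1}}\in\Z^n\times\Z_+$ with $\bar x_{n+1}>0$; moreover by Lemma~\ref{lemma:B-lambda-max} this set is maximal lattice-free in $\R^{n+1}$. (3) Use $\psi(\bar r)>0$ together with Lemma~\ref{lemma:bounded-rec} (applicable since $\pi^*(\bar r)=\psi(\bar r)>0$, because $\bar r\in R_\psi$) to conclude that $\rec(B(\pi^*(\bar r),\bar r)\cap(\R^n\times\R_+))=\rec(B)\times\{0\}$, so the blocking points with positive last coordinate form a bounded set in that coordinate; pick one, call it ${\bar x\choose k}$ with $k\geq 1$ minimal. (4) Show $k=1$: if $k\geq 2$, I would argue that the point $\bar r$ can be replaced by $p=\bar r$ (or a nearby point / integer translate) for which $B(\pi^*(p),p)$ already contains ${\bar x\choose 1}$ on its boundary — roughly, the facet inequalities of $B(\pi^*(\bar r),\bar r)$ evaluated at $x_{n+1}=1$ instead of $x_{n+1}=k$ together with the nonnegativity of the coefficients of $x_{n+1}$ (which holds exactly because $\pi^*(\bar r)=\psi(\bar r)=\max_i a_i\bar r\geq a_i\bar r$) force ${\bar x\choose 1}\in B(\pi^*(\bar r),\bar r)$, and then $\bar x-(\text{something})$ lands in $H$; here is where $\bar r$ being in the closure of $H$ is exploited. (5) Finally verify $p\in H$: this is the step that genuinely uses the closure-of-$H$ property and the fact that $R_\psi+\Z^n$ is closed (Lemma~\ref{lemma:closed-set}) — one shows that the existence of such a blocking point ${\bar x\choose 1}$ forces $p$ not to lie in any $R(x)+\Z^n$, contradicting the alternative $p\in R_\psi+\Z^n$.

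The main obstacle I anticipate is step (4)–(5): pinning the last coordinate of the blocking point to exactly $1$ and simultaneously certifying that the corresponding base point $p$ lies in $H$. The tension is that Theorem~\ref{lemma:block-point} only gives $\bar x_{n+1}>0$, not $\bar x_{n+1}=1$, and a priori the natural choice $p=\bar r$ lies in $R_\psi$, not in $H$. Resolving this will require either a perturbation argument (moving $\bar r$ slightly into $H$ and using continuity of $\pi^*$ to control how $B(\pi^*(p),p)$ deforms, keeping the integer point ${\bar x\choose 1}$ feasible) or a descent argument on the value of the last coordinate of the blocking point combined with an integer translation that carries $\bar r$ across the boundary into $H$ while preserving a blocking point at height $1$. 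I expect the continuity of $\pi^*$ (Proposition~\ref{prop:pi_continuous}) and the compactness extracted in step (3) via Lemma~\ref{lemma:bounded-rec} to be exactly the tools that make this perturbation work, with the closedness of $R_\psi+\Z^n$ ensuring that "arbitrarily close to $H$" can be upgraded to "in $H$" after the construction.
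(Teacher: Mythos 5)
Your steps (1)--(3) match the paper's strategy (boundary point $\bar r\in R_\psi$ in the closure of $H$ with $\psi(\bar r)>0$, continuity of $\pi^*$, and Lemma~\ref{lemma:bounded-rec} to bound the last coordinate of blocking points), but the heart of the claim --- steps (4)--(5) --- contains a genuine gap, and in step (5) the logic is reversed. You apply Theorem~\ref{lemma:block-point} at $r^*=\bar r$ and reduce the blocking point's height to $1$ using nonnegativity of the coefficients $\pi^*(\bar r)-a_i\bar r=\psi(\bar r)-a_i\bar r\geq 0$. That reduction is fine \emph{at $\bar r$}, but $\bar r\in R_\psi$, so it does not produce the required $p\in H$. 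The claim needs the blocking point at height $1$ for a point $p$ that lies in $H$, and for such $p$ we have $\pi^*(p)<\psi(p)$, so some coefficients $\pi^*(p)-a_ip$ may be \emph{negative}; the monotonicity trick of lowering $x_{n+1}$ from $\bar x_{n+1}$ to $1$ then fails for $B(\pi^*(p),p)$. Moreover, you propose to ``verify $p\in H$'' by showing the blocking point at height $1$ forces $p\notin R(x)+\Z^n$; this is backwards. In the main proof of Theorem~\ref{thm:main2}, the existence of a height-$1$ blocking point for $B(\pi^*(p),p)$ \emph{implies} $p\in R(\tilde x)+\Z^n$, which is exactly the contradiction with $p\in H$. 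So membership $p\in H$ must be enforced by the choice of $p$ (possible because $H$ is open, as $R_\psi+\Z^n$ is closed by Lemma~\ref{lemma:closed-set}, and $\bar r$ is in the closure of $H$), not deduced from the blocking structure.

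The paper resolves the tension you flag by a quantitative two-step transfer that your sketch does not supply. It first fixes a uniform height bound $M$ valid for all $r$ in a ball around $\bar r$ (using $\mu=\min\pi^*>0$ on that ball, via continuity), and a separation constant $\delta>0$ from Remark~\ref{rmk:exisits-delta} applied to $B(\psi(\bar r),\bar r)$. It then chooses $p\in H$ with $\|p-\bar r\|<\bar\varepsilon$ so close that $|(\psi(\bar r)-a_i\bar r)-(\pi^*(p)-a_ip)|<\delta/M$ for all $i$, applies Theorem~\ref{lemma:block-point} \emph{at $p$} to get a blocking point with $0<\bar x_{n+1}\leq M$, shows ${\bar x\choose 1}\in B(\psi(\bar r),\bar r)$ using the nonnegative coefficients there together with the $\delta/M$-closeness and the bound $\bar x_{n+1}\leq M$ (this also yields $a_i(\bar x-f)\leq 1$ for all $i$), and finally transfers back to $B(\pi^*(p),p)$ by a case split on the sign of $\pi^*(p)-a_ip$. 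Without this detour through $B(\psi(\bar r),\bar r)$ and the explicit $\delta$, $M$ estimates, your ``perturbation or descent'' step (4)--(5) remains a statement of the difficulty rather than a proof of it.
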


Before proving Claim~\ref{claim:block-point}, we use it to conclude the proof of Theorem~\ref{thm:main2}. By Claim~\ref{claim:block-point}, there exists $p\in\R^n\setminus(R_\psi+\Z^n)$ such that $B(\pi^*(p),p)$ contains a point ${\bar x\choose 1}$ with $\bar x\in\Z^n$. It follows that $a_i(\bar x - f) +
(\pi^*(p) - a_ip) \leq 1$ for all $i\in I$. Moreover, by Theorem~\ref{thm:B-lambda}, $B(\pi^*(p), p)$ is $S\times \Z_+$-free,  thus there exists
$h\in I$ such that $a_h(\bar x - f) + (\pi^*(p) - a_hp) = 1$. This
shows that $a_i(\bar x - f - p) \leq 1 - \pi^*(p)$ for all $i\in I$
and $a_h(\bar x - f - p) = 1 - \pi^*(p)$. Since $\psi(\bar x - f - p)= \max_{i\in I}a_i(\bar x - f - p) =a_h(\bar x - f - p)$, we have that
$$
\psi(\bar x - f - p) = 1 - \pi^*(p).
$$
Since $\pi^*$ is a lifting,  $\pi^*(p)+\pi^*(\bar x - f - p)\ge 1$ and since  $\pi^*(\bar x - f - p)\le \psi(\bar x - f - p)$, this shows that $\pi^*(\bar x - f - p)=\psi(\bar x - f - p)$. It follows that
$\bar x - f - p  \in R_\psi$. In particular, $\bar x - f - p\in  R(\tilde x)$ for some $\tilde x \in \Z^n \cap B$. By definition of $ R(\tilde x)$, $\tilde x - f - (\bar x - f - p) \in  R(\tilde x)$, that is, $\tilde x -\bar x + p \in  R(\tilde x)$. This implies that $p \in R(\tilde x) + \Z^n$, a contradiction.
\bigskip

The remainder of the proof is devoted to showing Claim~\ref{claim:block-point}. By Lemma~\ref{claim:boundary}, the closure of $H$ contains a point $\bar r \in R_\psi$ and such a vector satisfies $\pi^*(\bar r)=\psi(\bar r)>0$. Since, by Proposition~\ref{prop:pi_continuous}, $\pi^*$ is continuous, there exists $\bar\varepsilon>0$ such that, for every $r\in\R^n$ satisfying $\|r-\bar r\|\leq \bar\varepsilon$, $\pi^*(r)>0$.

Let $\mu=\min\{\pi^*(r)\st \|r-\bar r\|\leq \bar\varepsilon\}$. Note that $\mu$ is well defined since $\pi^*$ is continuous and $\{r\in\R^n\st \|r-\bar r\|\leq \bar\varepsilon\}$ is compact. Furthermore, by the choice of $\bar\varepsilon$, $\mu>0$. Let $M=\mu^{-1}$.

\begin{claim} \label{claim:2}
For every $r\in \R^n$ satisfying  $\|r-\bar r\|<\bar\varepsilon$ and every ${x\choose x_{n+1}}\in B(\pi^*(r),r)$,  we have $x_{n+1}\leq M$.
\end{claim}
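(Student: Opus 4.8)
The plan is to show that for every $r$ in the ball $\{r \st \|r - \bar r\| < \bar\varepsilon\}$, the set $B(\pi^*(r), r)$ cannot contain any point with large $(n{+}1)$-st coordinate, because such a point would force the lifting coefficient $\pi^*(r)$ to be smaller than $\mu$, contradicting the definition of $\mu$. The key tool is Lemma~\ref{lemma:bounded-rec}, which applies precisely because we arranged $\pi^*(r) > 0$ for all such $r$ (by the choice of $\bar\varepsilon$), so that $\rec(B(\pi^*(r), r) \cap (\R^n \times \R_+)) = \rec(B) \times \{0\}$; in particular $B(\pi^*(r), r) \cap (\R^n \times \R_+)$ has no recession direction with positive last coordinate.

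First I would fix $r$ with $\|r - \bar r\| < \bar\varepsilon$ and consider the inequality description of $B(\pi^*(r), r)$ from~\eqref{eq:B-lambda}: a point ${x \choose x_{n+1}}$ with $x_{n+1} \geq 0$ lies in it iff $a_i(x-f) + (\pi^*(r) - a_i r)x_{n+1} \leq 1$ for all $i \in I$. I would exploit the identity (already used in the discussion preceding Theorem~\ref{thm:B-lambda}) that, for $\lambda = \pi^*(r) \neq 0$, the point ${f \choose 0} + (\pi^*(r))^{-1}{r \choose 1}$ satisfies all these inequalities at equality; this point is in $B(\pi^*(r), r)$, and its last coordinate is exactly $(\pi^*(r))^{-1}$. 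More to the point, scaling shows that for any $t \geq 0$ the point ${f \choose 0} + t\,(\pi^*(r))^{-1}{r \choose 1}$ lies on the minimal face of $B(\pi^*(r), r)$. Now suppose ${x \choose x_{n+1}} \in B(\pi^*(r), r)$ with $x_{n+1} > M = \mu^{-1}$. Since $\mu \leq \pi^*(r)$ by definition of $\mu$, we get $x_{n+1} > (\pi^*(r))^{-1}$.

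The main obstacle — and the crux of the argument — is turning this into a recession-direction contradiction. The direction one wants to show is recessive is ${x - f \choose x_{n+1}}$, or rather a suitable combination of it with the known minimal-face point. Concretely, I would argue: the vector ${x - f \choose x_{n+1}} - (\pi^*(r))^{-1}{r \choose 1} = {x - f - (\pi^*(r))^{-1} r \choose x_{n+1} - (\pi^*(r))^{-1}}$ is a difference of two points of $B(\pi^*(r), r)$ one of which is on the minimal face, hence it lies in $\rec(B(\pi^*(r), r))$; and its last coordinate $x_{n+1} - (\pi^*(r))^{-1}$ is strictly positive by the previous paragraph. But this recession direction has positive last coordinate and lies in $\rec(B(\pi^*(r), r) \cap (\R^n \times \R_+))$ (the last coordinate being nonnegative keeps us inside $\R^n \times \R_+$ when added to feasible points), contradicting Lemma~\ref{lemma:bounded-rec} which says that recession cone equals $\rec(B) \times \{0\}$. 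Hence $x_{n+1} \leq M$, proving the claim. The delicate point to get right is the justification that the difference of a polyhedron point and a minimal-face point is a recession direction — this is standard polyhedral theory (every point of a pointed-modulo-lineality polyhedron is a minimal-face point plus a recession vector), and it is exactly where $\pi^*(r) > 0$, which guarantees $B(\pi^*(r), r)$ has a unique minimal face, is used.
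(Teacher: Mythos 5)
Your proof is correct and follows essentially the same route as the paper: both arguments rest on Lemma~\ref{lemma:bounded-rec} (applicable because $\pi^*(r)\ge\mu>0$ on the ball) together with the observation that ${f\choose 0}+\pi^*(r)^{-1}{r\choose 1}$ satisfies all inequalities of $B(\pi^*(r),r)$ at equality, the paper concluding via the LP $\max\,x_{n+1}$ being attained on the unique minimal face (so the maximum is $1/\pi^*(r)\le M$), you concluding by subtracting that apex point from a hypothetical point with $x_{n+1}>M$ to produce a recession direction with positive last coordinate, contradicting the lemma. One harmless slip: your aside that ${f\choose 0}+t\,\pi^*(r)^{-1}{r\choose 1}$ lies on the minimal face for every $t\ge 0$ is false (each inequality evaluates to $t$, so only $t=1$ is tight), but it is never used, and the step you do use---a point of the polyhedron minus a point satisfying all constraints at equality is a recession vector---is valid here precisely because the unique minimal face is the set where all inequalities are tight (your blanket ``minimal-face point plus recession vector'' claim would fail for, say, a polytope with several vertices).
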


\noindent Let $r\in\R^n$ such that  $\|r-\bar r\|<\bar\varepsilon$. By Lemma~\ref{lemma:bounded-rec}, $\max\{x_{n+1}\st {x\choose x_{n+1}}\in B(\pi^*(r),r)\}$ is bounded. The above is a linear program, thus the set of its optimal solutions contains a minimal face of $B(\pi^*(r),r)$. Since the point ${f\choose 0}+\frac{1}{\pi^*(r)}{r\choose 1}$ satisfies all inequalities $a_i(x-f)+(\pi^*(r)-a_ir)x_{n+1}\leq 1$ at equality, it follows that $B(\pi^*(r),r)$ has a unique minimal face and that ${f\choose 0}+\frac{1}{\pi^*(r)}{r\choose 1}$ is in it. It follows that ${f\choose 0}+\frac{1}{\pi^*(r)}{r\choose 1}$ is an optimal solution, thus $\max\{x_{n+1}\st {x\choose x_{n+1}}\in B(\pi^*(r),r))\}=\frac{1}{\pi^*(r)}\leq M$.
This concludes the proof of Claim \ref{claim:2}.
\bigskip

By Remark~\ref{rmk:exisits-delta} and Lemma~\ref{lemma:B-lambda-max}, there exists $\delta>0$  such that, for every ${\bar x\choose \bar x_{n+1}}\in(\Z^n\times\Z_+)\sm B(\psi(\bar r),\bar r)$, $a_i(\bar x-f)+(\psi(\bar r)-a_ir)\bar x_{n+1}\geq 1+\delta$, for some $i\in I$.

Since $\pi^*$ is a continuous function, $\pi^*(\bar r)=\psi(\bar r)$, and $\bar r$ is in the closure of $H$, there exists $p\in H$ such that $\|p-\bar r\|<\bar\varepsilon$ and $$|(\psi(\bar r)-a_i\bar r)-(\pi^*(p)-a_ip)|<\frac\delta M \mbox{ for all } i\in I.$$

By Theorem~\ref{lemma:block-point} and Claim~\ref{claim:2}, $B(\pi^*(p),p)$ contains a point ${\bar  x\choose \bar x_{n+1}}\in\Z^n\times \Z_+$ such that $0<\bar x_{n+1}\leq M$.
We conclude by showing that ${\bar  x\choose 1}$ is in $B(\pi^*(p),p)$, thus proving Claim~\ref{claim:block-point}.
\medskip

First we show that ${\bar  x\choose 1}$ is in $B(\psi(\bar r),\bar r)$. Indeed, for all $i\in I$,
\begin{equation}\label{eq:final}
\begin{array}{rcl}
a_i(\bar x-f)+(\psi(\bar r)-a_i\bar r) & \leq & a_i(\bar x-f)+(\psi(\bar r)-a_i\bar r)\bar x_{n+1} \\
& < &  a_i(\bar x-f)+(\pi^*(p)-a_i p+\frac \delta M)\bar x_{n+1} \\
& \leq & 1+\delta,
\end{array}\end{equation}
where the first inequality follows from the facts that $\psi(\bar r)\geq a_i \bar r$ and $\bar x_{n+1}\geq 1$, while the last inequality follows from the facts that ${\bar  x\choose \bar x_{n+1}}\in B(\pi^*(p),p)$ and $\bar x_{n+1}\leq M$. By our choice of $\delta$, the strict inequality in \eqref{eq:final} shows that ${\bar  x\choose 1}$ is in $B(\psi(\bar r),\bar r)$. In particular, since $\psi(\bar r)\geq a_i\bar r$ for all $i\in I$, it follows that $a_i(\bar x-f)\leq 1$ for all $i\in I$.

We finally show that  $a_i(\bar x-f)+\pi^*(p)-a_ip\leq 1$ for all $i\in I$, implying that ${\bar  x\choose 1}$ is in $B(\pi^*(p),p)$. Indeed, if $\pi^*(p)-a_ip \leq 0$, then $a_i(\bar x-f)+\pi^*(p)-a_ip\leq 1+\pi^*(p)-a_ip\leq 1$, while if $\pi^*(p)-a_ip > 0$, then $a_i(\bar x-f)+\pi^*(p)-a_ip\leq a_i(\bar x-f)+(\pi^*(p)-a_ip)\bar x_{n+1}\leq 1$.
\end{proof}

\section*{Acknowledgements}

The authors would like to thank Marco Molinaro for helpful discussions about the results
presented in this paper.

\end{document}